\newtheorem*{hyp*}{Hypothesis \bf{\((\ast )\)}}
\newtheorem{thm}{Theorem}[section]
\newtheorem{lem}[thm]{Lemma}
\newtheorem{cor}[thm]{Corollary}
\newcommand{\irr}[1]{\text{Irr}(#1)}
\newcommand{\cod}[1]{\text{cod}(#1)}
\newcommand{\cd}[1]{\text{cd}(#1)}
\renewcommand{\ker}[1]{\text{ker}(#1)}
\begin{document}

\title[\(p\)-groups with \(p^2\) as a codegree]{\(p\)-groups with \(p^2\) as a codegree} 

\author{Sarah Croome}
\address{%
Department of Mathematical Sciences\\
Kent State University\\
Kent, OH 44242}

\email{scroome@kent.edu}

\author{Mark L. Lewis}
\address{Department of Mathematical Sciences\\
Kent State University\\
Kent, OH 44242}
\email{lewis@math.kent.edu}

\subjclass{ 20C15;  20D15}
\keywords{codegrees, characters, \(p\)-groups} 

\begin{abstract} Let \(G\) be a \(p\)-group and let \(\chi\) be an irreducible character of \(G\). The codegree of \(\chi\) is given by \(|G:\text{ker}(\chi)|/\chi(1)\). This paper investigates the relationship between the nilpotence class of a group and the inclusion of \(p^2\) as a codegree. If \(G\) is a finite \(p\)-group with coclass \(2\) and order at least \(p^5\), or coclass \(3\) and order at least \(p^6\), then \(G\) has \(p^2\) as a codegree. With an additional hypothesis this result can be extended to \(p\)-groups with coclass \(n\ge 3\) and order at least \(p^{2n}\). \end{abstract}

\maketitle

\section{Introduction}
In this paper all groups are finite \(p\)-groups for a prime \(p\), and we examine the relationship between the nilpotence class of a group and the existence of \(p^2\) as a codegree. The codegree of an irreducible character \(\chi\) of a finite group \(G\) is defined as \(|G:\text{ker}(\chi)|/\chi(1)\). The set of codegrees of the irreducible characters of a finite group \(G\) is denoted \(\cod{G}\). This definition for codegrees first appeared in \cite{OG}, where the authors use a graph-theoretic approach to compare the structure of a group with its set of codegrees. More recently, Du and Lewis showed that \(p\)-groups with exactly 3 codegrees have nilpotence class at most \(2\) \cite{codandnil}. In \cite{memax}, it was shown that if \(G\) has order \(p^{n}\) and \(\cod{G}\) contains every power of \(p\) up to \(p^{n-1}\), then \(G\) either has maximal class or nilpotence class at most 2.  The set of codegrees of a \(p\)-group always includes \(p\) \cite[Lemma 2.4]{codandnil}, and Lemma 2.3 of \cite{memax} shows that if \(G\) has maximal class, \(p^2\) is always included in \(\text{cod}(G)\). When \(p^2\) is missing from the set of codegrees of a group \(G\), the quotient \(G/G'\) is elementary abelian \cite[Corollary 2.5]{codandnil}, and, for \(p\)-groups such that \(|G|=p^n\) for \(5\le n\le 7\), the nilpotence class of \(G\) is at most \(n/2\). 

The coclass of a \(p\)-group \(G\) with nilpotence class \(n\) is defined as \(\log_p( |G|)-n\). As \(p\)-groups with large nilpotence class relative to their order have a more predictable structure, it is often possible to characterize groups with small coclass in ways that are impossible for groups with a fixed nilpotence class but arbitrarily large order. The following theorem describes a feature shared by large enough \(p\)-groups of coclass 2 and coclass 3.

\begin{thm}
\label{coclass} Let \(G\) be a \(p\)-group. 
\begin{enumerate}
[label=(\roman*),font=\upshape]
\item If \(G\) has coclass 2 and order at least \(p^5\), then \(p^2\in \cod{G}\). \label{coclass_2}
\item \label{coclass_3} If \(G\) has coclass 3 and order at least \(p^6\), then \(p^2\in\cod{G}\).
\end{enumerate}
\end{thm}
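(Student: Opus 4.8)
The plan is to reduce the statement to a purely structural question about quotients and then to exploit the rigidity of the lower central series in small coclass. First I would record the characterization that a \(p\)-group \(G\) satisfies \(p^2\in\cod{G}\) if and only if \(G\) has a quotient isomorphic either to a cyclic group of order \(p^2\) or to a nonabelian group of order \(p^3\). Indeed, if \(|G:\ker{\chi}|/\chi(1)=p^2\) with \(\chi(1)=p^a\), then \(H=G/\ker{\chi}\) has order \(p^{a+2}\) and carries a faithful irreducible character of degree \(p^a\); the standard bound \(\chi(1)^2\le|H:Z(H)|\) gives \(p^{2a}\le p^{a+2}/|Z(H)|\), so \(|Z(H)|\le p^{2-a}\) and hence \(a\le 1\). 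The value \(a=0\) forces \(H\) cyclic of order \(p^2\) and \(a=1\) forces \(H\) nonabelian of order \(p^3\); conversely each such quotient manifestly contributes codegree \(p^2\). By \cite[Corollary 2.5]{codandnil}, if \(p^2\notin\cod{G}\) then \(G/G'\) is elementary abelian, so no cyclic quotient of order \(p^2\) can exist in the case of interest. Thus it suffices to assume \(G/G'\) elementary abelian and to produce a nonabelian quotient of order \(p^3\).

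Second, I would translate this into linear algebra. Every quotient of order \(p^3\) has class at most \(2\) and so factors through \(\bar G=G/\gamma_3(G)\); I therefore work with \(V=G/\gamma_2(G)\), the space \(W=\gamma_2(G)/\gamma_3(G)\), and the surjective alternating commutator form \(\beta:V\times V\to W\) (here \(\gamma_2(G)=G'\)). One checks that \(G\) has a nonabelian quotient of order \(p^3\) precisely when there is a nonzero functional \(\lambda:W\to\mathbb{F}_p\) for which \(\beta_\lambda:=\lambda\circ\beta\) has rank exactly \(2\) and, in addition, the central subgroup \(W/\ker{\lambda}\) of order \(p\) is a direct factor of the centre of \(\bar G/\ker{\lambda}\). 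The rank condition is the combinatorial heart, while the splitting condition is a constraint on the \(p\)-power map, and I expect the latter to be the main obstacle.

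Third, I would pin down the shape \((a_1,\dots,a_c)\) of the lower central series, where \(a_i=\log_p|\gamma_i(G)/\gamma_{i+1}(G)|\), using \(\sum_i(a_i-1)=\mathrm{coclass}(G)\) together with \(a_1=\dim V\) (valid because \(G/G'\) is elementary abelian). Since a two-generated \(p\)-group has cyclic lower central factors, \(a_1=2\) forces coclass \(1\); hence coclass \(2\) admits only the shape \((3,1,\dots,1)\), while coclass \(3\) admits \((4,1,\dots,1)\) and the shapes with \(a_1=3\) and exactly one later \(a_j=2\). When \(\dim V=3\) every nonzero \(\beta_\lambda\) is alternating of rank \(2\), so the rank condition holds; the only shape with \(\dim V=4\) is \((4,1,\dots,1)\), where \(W\) is one-dimensional and the single form \(\beta\) has rank \(2\) or \(4\). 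Rank \(4\) occurs exactly for the extraspecial group of order \(p^5\) (class \(2\)), which the bound \(p^6\) excludes, and a Hall--Witt computation shows that once the class is at least \(3\) — which the order hypotheses guarantee — the form \(\beta\) must be degenerate, hence of rank \(2\). This is the precise role of the order bounds.

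Finally, I would dispose of the splitting condition, which is where I expect the real work to lie. When \(\gamma_2(G)/\gamma_3(G)\) is cyclic, generated by \(c\), the obstruction would assert that \(c\) is the \(p\)-th power of an element \(v\) central modulo \(\gamma_3(G)\); expanding the commutator gives \([c,x_i]\equiv[v,x_i]^p\pmod{\gamma_4(G)}\) with \([v,x_i]\in\gamma_3(G)\), so whenever the relevant factor has exponent \(p\) we get \([c,x_i]\in\gamma_4(G)\) for all generators \(x_i\), whence \(\gamma_3(G)=\gamma_4(G)\) and the class is at most \(2\), contrary to the order hypotheses. Thus guaranteeing class at least \(3\) simultaneously forces the rank-\(2\) condition and kills the splitting obstruction. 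The delicate residual cases are the coclass-\(3\) shapes with \(\dim W=2\), where \(\lambda\) must be chosen to secure rank \(2\) and splitting at once, together with the exponent-\(p^2\) configurations and the prime \(p=2\), where the power map is no longer additive; I would treat these by a finer analysis of the admissible forms. I expect the sharpness of the bounds to be witnessed by the class-\(2\) examples — the extraspecial group of order \(p^5\) and the order-\(p^4\) group with \(x_3^{\,p}=[x_1,x_2]\) — in which the splitting obstruction genuinely occurs and \(p^2\) is absent from the codegrees.
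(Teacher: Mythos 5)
Your overall strategy --- reduce to producing an extraspecial quotient of order \(p^3\) (your first step is essentially Lemma \ref{p_squared_iff} of the paper), recast that as a rank condition plus a splitting condition on the commutator form \(\beta\colon V\times V\to W\), and then classify the possible shapes of the lower central series --- is genuinely different from the paper, which instead inducts on \(|G|\), passing to \(G/Z\), with base cases of order \(p^5\) and \(p^6\) handled by character-theoretic case analysis. However, the proposal has two genuine gaps. First, the shape classification rests on a false statement: it is not true that a two-generated \(p\)-group has cyclic lower central factors. Only \(\gamma_2(G)/\gamma_3(G)\) is forced to be cyclic; \(\gamma_3(G)/\gamma_4(G)\) is generated by the images of \([x,y,x]\) and \([x,y,y]\) and can have rank \(2\). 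For instance, the largest class-\(3\) quotient of the free group of rank \(2\) and exponent \(p\) (for \(p\ge 5\)) has order \(p^5\), is two-generated, and has coclass \(2\); two-generated groups of coclass \(3\) exist as well. So ``\(a_1=2\) forces coclass \(1\)'' is wrong, and your list of admissible shapes is incomplete. This particular gap is repairable: if \(a_1=2\) and \(p^2\notin\cod{G}\), then \(\gamma_2(G)/\gamma_3(G)\) is cyclic and (by the elementary-abelian facts imported from \cite{codandnil}) of order \(p\), so \(G/\gamma_3(G)\) is itself a nonabelian quotient of order \(p^3\) and you are done --- but that argument is not in your proposal; as written, the case is disposed of by a false lemma.

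Second, and more seriously, the cases where the real difficulty lies are deferred rather than proved. For coclass \(3\) you must handle the shapes with \(|\gamma_2(G)/\gamma_3(G)|=p^2\), the configurations in which \(\gamma_2/\gamma_3\) or \(\gamma_3/\gamma_4\) has exponent \(p^2\) (your splitting argument needs the relevant factor to have exponent \(p\)), and the prime \(p=2\); you explicitly postpone all of these to ``a finer analysis of the admissible forms,'' and the rank-\(4\) exclusion for the shape \((4,1,\dots,1)\) is only asserted via an unspecified ``Hall--Witt computation.'' (That exclusion is true, but the clean argument is via capability: if \(\beta\) were nondegenerate, then \(G/\gamma_4(G)\) would have center exactly \(\gamma_3(G)/\gamma_4(G)\), exhibiting the extraspecial group \(G/\gamma_3(G)\) of order \(p^5\) as a central quotient, contradicting \cite[Cor.~8.2]{capable_extra_special}.) These deferred configurations are precisely where the paper's proof of the order-\(p^6\) base case (Lemma \ref{p_squared_in_p_sixth}) spends essentially all of its effort, invoking faithful characters, the weak and strong conditions of \cite{extreme_degrees}, and normally constrained groups; until an equivalent analysis is actually carried out, part \ref{coclass_3} is not proved. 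By contrast, your argument for coclass \(2\) (shape \((3,1,\dots,1)\): every nonzero \(\lambda\) gives \(\beta_\lambda\) of rank \(2\), and the splitting obstruction \(c=v^p\) with \(v\) central modulo \(\gamma_3\) forces \(\gamma_3=\gamma_4\), hence class at most \(2\)) does appear to go through once the \(a_1=2\) case is repaired and the elementary-abelian facts about the lower central factors are made explicit, and it would give a non-inductive alternative to the paper's proof of part \ref{coclass_2}.
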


With an additional hypothesis, we can broaden our results to \(p\)-groups of arbitrary finite order which have large enough nilpotence class. This condition was seen to occur in groups included in the Small Groups database of Magma \cite{magma}.

\begin{hyp*} If \(G\) is a \(p\)-group with nilpotence class \(n\) such that \(|G|\ge p^{2n}\), then \(|Z_2(G)|\ne p^2\).
\end{hyp*}

\begin{thm}
\label{p2_coclass_star}
Let a \(p\)-group \(G\) and all of its quotients satisfy Hypothesis \((\ast )\). If \(G\) has coclass \(n\ge 3\) and \(|G|\ge p^{2n}\), then \(p^2\in\cod{G}\).
\end{thm}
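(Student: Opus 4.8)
The plan is to convert the codegree condition into the existence of a very small quotient, strip off the center by induction until a single boundary configuration survives, and let Hypothesis \((\ast)\) act precisely there. I would begin with the reduction that makes the problem combinatorial. If \(\chi\in\irr{G/N}\) is inflated to \(G\), its degree and the index of its kernel are unchanged, so \(\cod{G/N}\subseteq\cod{G}\) for every \(N\trianglelefteq G\). Conversely, if \(\chi\in\irr{G}\) has codegree \(p^2\) and \(Q=G/\ker{\chi}\), then \(|Q|=p^2\chi(1)\), and the bound \(\chi(1)^2\le|Q:Z(Q)|\) forces \(\chi(1)\in\{1,p\}\); hence \(Q\) is cyclic of order \(p^2\) or nonabelian of order \(p^3\). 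Thus \(p^2\in\cod{G}\) iff \(G\) has a quotient cyclic of order \(p^2\) (equivalently, \(G/G'\) is not elementary abelian) or a nonabelian quotient of order \(p^3\). Assuming \(p^2\notin\cod{G}\), Corollary 2.5 of \cite{codandnil} gives that \(G/G'\) is elementary abelian, and the above shows that \(G\) has no nonabelian quotient of order \(p^3\); my task is to contradict this.

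Next I would induct on \(|G|\). Writing \(|Z(G)|=p^{z}\), the quotient \(G/Z(G)\) has class one less and coclass \(n-z+1\). If \(z\ge2\) the coclass strictly drops while (a routine check from \(|G|\ge p^{2n}\)) the order hypothesis of the relevant smaller case survives, so \(p^2\in\cod{G/Z(G)}\subseteq\cod{G}\) by the inductive hypothesis when the new coclass is at least \(3\), by Theorem \ref{coclass}\ref{coclass_2} when it is \(2\), and by Lemma 2.3 of \cite{memax} when it is \(1\). If instead \(z=1\) and \(|G|>p^{2n}\), then \(G/Z(G)\) again satisfies all the hypotheses with the same coclass \(n\) and smaller order, so the inductive hypothesis applies to it. Since coclass \(3\) is exactly Theorem \ref{coclass}\ref{coclass_3}, this descent reduces everything to the boundary configuration \(n\ge4\), \(|G|=p^{2n}\), and \(|Z(G)|=p\), in which the class, the coclass, and \(n\) all coincide.

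Here Hypothesis \((\ast)\) applies to \(G\) itself, giving \(|Z_2(G)|\ne p^2\); with \(|Z(G)|=p\) and \(Z(G)\subsetneq Z_2(G)\) this forces \(|Z_2(G)|\ge p^3\), so \(Z_2(G)/Z(G)=Z(G/Z(G))\) has rank at least \(2\). Assuming still that \(p^2\notin\cod{G}\), we have \(\Phi(G)=G'\), and (up to a complementation issue) the commutator form \(\beta\colon V\times V\to W\) on \(V=G/G'\) with values in \(W=G'/\gamma_3(G)\) has every nonzero composite \(\lambda\circ\beta\), \(\lambda\in W^{*}\), of rank at least \(4\); in particular \(\dim V\ge4\). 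I would then play this against the second center. For \(x\in Z_2(G)\setminus Z(G)\) the map \(g\mapsto[x,g]\) is a homomorphism of \(G\) onto \(Z(G)\cong\mathbb{Z}/p\); its kernel \(C_G(x)\) has index \(p\), hence is maximal and contains \(\Phi(G)=G'\), so \(x\mapsto([x,\cdot]\bmod G')\) embeds \(Z_2(G)/Z(G)\) into \(V^{*}\) as a subspace of dimension at least \(2\). The goal is to combine these ``almost central'' directions with a rank-two piece of the lower-central structure to produce a normal subgroup of index \(p^3\) with nonabelian quotient, contradicting the rank-\(\ge4\) condition on \(\beta\).

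Making this last step precise is where I expect the main obstacle to lie, and the difficulty is structural rather than numerical: forms all of whose scalarizations have rank at least \(4\) do exist in isolation, so one cannot avoid using that \(\beta\) is the top commutator form of a group whose class equals its coclass — forcing a maximal-class-like lower central series, with \(|\gamma_i/\gamma_{i+1}|=p\) for most \(i\) — together with the constraint \(|Z_2(G)|\ge p^3\) coming from Hypothesis \((\ast)\). Reconciling such a maximal-class-like tail with a second center of order at least \(p^3\), and converting that incompatibility into the required nonabelian quotient of order \(p^3\) while monitoring the normality and \(p\)-th-power (complementation) conditions that \(|Z(G)|=p\) and the elementary abelian \(G/G'\) keep under control, is where the real work concentrates; by contrast the inductive descent that delivered this boundary case is routine.
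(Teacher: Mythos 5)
Your opening reduction (the equivalence of \(p^2\in\cod{G}\) with a cyclic quotient of order \(p^2\) or a nonabelian quotient of order \(p^3\)) and your inductive descent through the center both match the paper's strategy, which the paper organizes through Theorem \ref{star_class_p2}, a reformulation in terms of nilpotence class rather than coclass. But there is a genuine gap exactly where the theorem's content lies: the boundary case \(n\ge4\), \(|G|=p^{2n}\), \(|Z(G)|=p\), class \(=\) coclass \(=n\). There you propose to extract a contradiction from rank conditions on the commutator form \(\beta\) together with \(|Z_2(G)|\ge p^3\), and you concede that you cannot carry this out. That concession is the theorem: as you note yourself, everything up to that point is routine, and the sketched form-theoretic argument is not close to complete (indeed your own remark that such forms exist in isolation shows the sketch cannot work without further group-theoretic input you have not identified).

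The paper never derives a contradiction at the boundary; it keeps the descent going, and Hypothesis \((\ast)\) is what makes the next step of descent possible. Concretely: Hypothesis \((\ast)\) applied to \(G\) (class \(n\), order \(p^{2n}\)) gives \(|Z_2(G)|\ge p^3\). For \(a\in Z_2(G)\) and \(g\in G\) one has \([a,g]\in Z(G)\), so \([a^p,g]=[a,g]^p=1\) since \(|Z(G)|=p\); hence \(Z_2(G)/Z(G)\) is elementary abelian of rank at least \(2\). One may therefore choose \(N\lhd G\) with \(Z(G)<N\le Z_2(G)\), \(|N|=p^2\), and \(N\) different from the penultimate term \(G_{c-1}\) of the lower central series (possible because \(Z_2(G)/Z(G)\) has at least \(p+1\) subgroups of order \(p\)). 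Then \(G_c=Z(G)\le N\) while \(G_{c-1}\not\le N\), so \(G/N\) has class exactly \(c-1=n-1\) and order \(p^{2n-2}=p^{2(n-1)}\); in your coclass bookkeeping, \(G/N\) has coclass \(n-1\ge3\) and order exactly \(p^{2(n-1)}\), and all of its quotients are quotients of \(G\), so your own inductive hypothesis (or Theorem \ref{coclass} \ref{coclass_3} when \(n=4\)) yields \(p^2\in\cod{G/N}\subseteq\cod{G}\). In short, Hypothesis \((\ast)\) is used not to force a structural contradiction but to guarantee that \(Z_2(G)\) is large enough to quotient by a subgroup of order \(p^2\) while losing only one unit of class — precisely the move that preserves the induction invariant at your boundary configuration. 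With that replacement your argument closes; without it, it does not.
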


We do not know if Hypothesis (\(\ast\)) is needed to prove the conclusion of Theorem \ref{p2_coclass_star}.  We do not have any examples of a \(p\)-group \(G\) with coclass \(n \ge 3\) and \(|G| \ge p^{2n}\) such that \(p^2 \not\in \cod G\).  However, at this time, we do not see how to prove the conclusion of Theorem \ref{p2_coclass_star} without using this hypothesis. We expect this work to appear as part of the first author's Ph.D. dissertation at Kent State University.

\section{Main Results}
Our first lemma can be inferred from \cite{codandnil}.

\begin{lem}
\label{nonlinear}
Let \(G\) be a \(p\)-group with \(p^2\notin\cod{G}\). If \(\chi\) is an irreducible character of \(G\) such that \(\cod{\chi}>p\), then \( \chi\) is non-linear. 
\end{lem}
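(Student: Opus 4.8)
The plan is to prove the contrapositive: assuming \(\chi\) is linear, I would show that \(\cod{\chi} \le p\), which excludes \(\cod{\chi} > p\). The first step is to unwind the codegree formula for a linear character. Since \(\chi(1) = 1\), the definition gives \(\cod{\chi} = |G : \ker{\chi}|\), and by the first isomorphism theorem this index equals the order of the cyclic image \(\chi(G) \subseteq \mathbb{C}^\times\). Because that image is abelian, \(G'\) is contained in \(\ker{\chi}\), so \(\chi\) factors through the abelianization and \(|G : \ker{\chi}|\) is the order of the corresponding cyclic quotient of \(G/G'\).

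The second step imports the structural input already recorded in the introduction, namely \cite[Corollary 2.5]{codandnil}: the hypothesis \(p^2 \notin \cod{G}\) forces \(G/G'\) to be elementary abelian. Every element of an elementary abelian \(p\)-group has order dividing \(p\), so any quotient of \(G/G'\) --- in particular the cyclic group \(G/\ker{\chi}\) --- has exponent dividing \(p\). A nontrivial cyclic group of exponent dividing \(p\) has order exactly \(p\), so \(|G : \ker{\chi}| \in \{1, p\}\), giving \(\cod{\chi} \le p\).

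Putting the steps together, no linear character can have codegree exceeding \(p\) when \(p^2\) is absent from \(\cod{G}\); equivalently, any \(\chi\) with \(\cod{\chi} > p\) is non-linear. I do not expect a genuine obstacle here: the whole argument rests on transporting the elementary abelian structure of \(G/G'\) from \cite{codandnil}, which is precisely why the lemma is described as inferable from that paper. The only delicate point is bookkeeping --- confirming that a linear character factors through \(G/G'\) and that an elementary abelian quotient supports only homomorphisms into \(\mathbb{C}^\times\) whose image has order at most \(p\) --- and this is routine.
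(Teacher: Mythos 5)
Your proof is correct and follows essentially the same route as the paper: both arguments observe that a linear character has codegree \(|G:\ker{\chi}|\) with \(G/\ker{\chi}\) cyclic and \(G'\le\ker{\chi}\), then invoke \cite[Corollary 2.5]{codandnil} to force \(G/G'\) (hence its cyclic quotient) to be elementary abelian, bounding the codegree by \(p\). The only difference is presentational --- you argue by contrapositive where the paper argues by contradiction --- which is immaterial.
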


\begin{proof} 
Let \(\chi\) be an irreducible character of \(G\) such that \(\cod{\chi}=p^a\) for some \(a>2\), and suppose that \(\chi\) is linear.  Then \(\ker{\chi}\ge G' \), so \(G/\ker{\chi}\) is abelian. Since \(\chi\) is a faithful irreducible character of  \(G/\ker{\chi}\), this quotient  must be cyclic, and \(|G/\ker{\chi}|=\chi(1)\cod{\chi}=p^a>p^2\). By Corollary 2.5 of \cite{codandnil}, \(G/G'\) is elementary abelian, which is impossible since \(\ker{\chi}\ge G'\) and \(G/\ker{\chi}\) is cyclic with order greater than \(p^2\). 
\end{proof}

We will also make use of the following lemma.

\begin{lem}
\label{lewis_bound_z}
Let \( G \) be a \(p\)-group with nilpotence class \( n \ge 2 \). Then one of the following occurs:
\begin{enumerate}
[label=(\roman*),font=\upshape]
\item \(G \) has maximal class,
\item \( G \) is extraspecial,
\item \( |Z_{n-1}|\ge p^n \).
\end{enumerate}
\end{lem}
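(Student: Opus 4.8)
The plan is to bound \(|Z_{n-1}(G)|\) from below and to show that equality can occur only for the two exceptional families. Throughout I would write \(1 = Z_0(G) \le Z_1(G) \le \cdots \le Z_n(G) = G\) for the upper central series. Since \(G\) has class exactly \(n\), every factor \(Z_{i+1}(G)/Z_i(G)\) is nontrivial, so \(|Z_{i+1}(G)/Z_i(G)| \ge p\) and hence \(|Z_{n-1}(G)| \ge p^{n-1}\). If this inequality is strict we are in conclusion (iii), so I would assume \(|Z_{n-1}(G)| = p^{n-1}\). This forces \(|Z_i(G)| = p^i\) for every \(0 \le i \le n-1\); in particular \(|Z(G)| = p\), and it is this cyclic-center situation that must be shown to be extraspecial or of maximal class.

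Next I would analyze the top quotient \(H := G/Z_{n-2}(G)\). Its center is \(Z(H) = Z_{n-1}(G)/Z_{n-2}(G)\), of order \(p\), and \(H\) has class exactly \(2\), so \(H' \le Z(H)\) is nontrivial and therefore \(H' = Z(H)\). The commutator then induces an alternating form \(H/Z(H) \times H/Z(H) \to H' \cong \mathbb{F}_p\) whose radical is trivial because \(Z(H)\) is precisely the center. A standard computation (using \([x,y]^p = 1\) to see \(x^p \in Z(H)\)) shows \(H/Z(H)\) is elementary abelian, so the nondegenerate alternating form has even rank \(2k\) and \(\Phi(H) = H' = Z(H)\); thus \(H\) is extraspecial of order \(p^{2k+1}\) and \(|G| = p^{n+2k-1}\). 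When \(n = 2\) we have \(Z_{n-2}(G) = 1\), so \(G = H\) is extraspecial and conclusion (ii) holds.

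It remains to treat \(n \ge 3\), where I would prove \(k = 1\), giving \(|G| = p^{n+1}\) and hence maximal class (conclusion (i)). Passing to \(K := G/Z_{n-3}(G)\) yields a group of class \(3\) with \(|Z(K)| = p\), with \(\gamma_3(K) = [K',K] = Z(K)\), and with \(K/Z(K) = H\) extraspecial of rank \(2k\); here \(K' = \langle C \rangle Z(K)\) for a single lift \(C\) of the generator of \(Z(H)\), so \(\gamma_3(K) = [C,K]\) and the map \(g \mapsto [C,g]\) is a surjective homomorphism \(K \to Z(K)\). The crux is to rule out \(k \ge 2\): fixing a hyperbolic pair \(a_1, b_1\) with \(C = [A_1,B_1]\), any lift \(g\) of a vector orthogonal to both \(a_1\) and \(b_1\) satisfies \([A_1,g],[B_1,g] \in Z(K)\), whereupon the Hall--Witt identity collapses to \([C,g] = 1\); and when \(k \ge 2\) I can choose a second hyperbolic pair in the orthogonal complement, rewrite \(C\) through it, and thereby kill the remaining commutators \([C,A_1]\) and \([C,B_1]\) as well. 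This would make \(C\) central in \(K\), contradicting class \(3\), so \(k = 1\). I expect this Hall--Witt calculation---exploiting nondegeneracy and \(\dim \ge 4\) to re-express the central commutator through an orthogonal pair---to be the main obstacle; the lower bound on \(|Z_{n-1}(G)|\) and the extraspecial recognition are routine by comparison.
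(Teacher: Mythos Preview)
Your argument is correct and follows the same opening as the paper: reduce to $|Z_{n-1}(G)|=p^{n-1}$, deduce $|Z_i(G)|=p^i$ for all $i\le n-1$, and observe that $H=G/Z_{n-2}(G)$ is extraspecial, which finishes the case $n=2$. The divergence comes in handling $n\ge 3$. The paper simply notes that $H$ is then \emph{capable} (being the central quotient of $G/Z_{n-3}(G)$) and quotes the theorem of Beyl--Felgner--Schmid that a capable extraspecial $p$-group has order $p^3$, immediately giving $|G:Z_{n-2}(G)|=p^3$ and hence maximal class. You instead work inside $K=G/Z_{n-3}(G)$ and give a self-contained commutator computation: the conjugation identity $[A_1,B_1]^g=[A_1^g,B_1^g]$ with $[A_1,g],[B_1,g]\in Z(K)$ shows $[C,g]=1$ for $g$ orthogonal to the first hyperbolic pair, and re-expressing the central commutator via a second hyperbolic pair (available once $k\ge 2$) kills the remaining $[C,A_1],[C,B_1]$. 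This is essentially an ad hoc proof of the cited capability result in the case at hand; it makes the lemma independent of \cite{capable_extra_special} at the cost of a short symplectic/Hall--Witt calculation, while the paper's citation keeps the proof to a few lines but imports a nontrivial external fact.
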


\begin{proof}
Let \(G\) have nilpotence class \(n\) and assume \(|Z_{n-1}|=p^{n-1}\).  Notice that this order is as small as possible, and hence \(|Z_{n-1}/Z_{n-2}|=p\).  Since \(Z_{n-1}/Z_{n-2}=Z(G/Z_{n-2})\), and \(G/Z_{n-2}\) has class 2, we have \(|(G/Z_{n-2})'|=p\), which shows that \(G/Z_{n-2}\) is extraspecial. If \(|Z_{n-2}|= 1\), then \(G\) is extraspecial. If  \(|Z_{n-2}|> 1\), then \(G/Z_{n-2}\) is also capable, as the quotient of \(G/Z_{n-3}\) by its center is isomorphic to \(G/Z_{n-2}\). It is known that a group which is both extraspecial and capable has order \(p^3\) \cite[Cor. 8.2]{capable_extra_special}, hence \(|G/Z_{n-2}|=p^3\), which shows that \(G\) has maximal class. \end{proof}

The next lemma gives our first indication of when \(p^2\) will, or will not, be included among a group's codegrees. 

\begin{lem}
\label{p_squared_iff}
Let \(G\) be a \(p\)-group. Then \(p^2\in\cod{G}\) if and only if either the exponent of \(G/G'\) is at least \(p^2\) or there exists \(N\lhd G\) such that \(G/N\) is extraspecial of order \(p^3\).
\end{lem}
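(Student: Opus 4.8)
The statement is a biconditional, so the plan is to prove the two implications separately. The reverse (``if'') direction is a pair of explicit constructions, while the forward (``only if'') direction rests on the standard degree bound for a faithful irreducible character, which forces the faithful quotient $G/\ker{\chi}$ to be very small.

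For the reverse direction, suppose first that $G/G'$ has exponent at least $p^2$. Since $G/G'$ is abelian it is a direct product of cyclic groups, one of which has order at least $p^2$; composing the projection onto that factor with a further quotient yields a surjection of $G/G'$ onto $C_{p^2}$, whose kernel pulls back to a normal subgroup $M \ge G'$ with $G/M$ cyclic of order exactly $p^2$. A faithful linear character $\lambda$ of $G/M$, inflated to $G$, then has $\ker{\lambda} = M$ and $\cod{\lambda} = |G:M|/1 = p^2$. Suppose instead that $G/N$ is extraspecial of order $p^3$ for some $N \lhd G$. Such a group has a faithful irreducible character $\psi$ of degree $p$; inflating $\psi$ to $G$ gives $\ker{\psi} = N$ and $\cod{\psi} = |G:N|/\psi(1) = p^3/p = p^2$. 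In either case $p^2 \in \cod{G}$.

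For the forward direction, assume $p^2 \in \cod{G}$ and fix an irreducible $\chi$ with $\cod{\chi} = p^2$. Write $K = \ker{\chi}$ and $H = G/K$, so $\chi$ is a faithful irreducible character of $H$ with $|H| = \chi(1)\cdot p^2$. Set $\chi(1) = p^b$, so $|H| = p^{b+2}$. Because $\chi$ is faithful, its center $Z(\chi)$ equals $Z(H)$, and the inequality $\chi(1)^2 \le |H : Z(\chi)|$, which follows from $|H| = \sum_{g \in H} |\chi(g)|^2 \ge |Z(\chi)|\,\chi(1)^2$, gives $p^{2b} \le p^{b+2}/|Z(H)|$, that is, $|Z(H)| \le p^{2-b}$. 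This forces $b \le 1$: if $b \ge 2$ then $|Z(H)| \le 1$, contradicting $Z(H)\ne 1$ for the nontrivial $p$-group $H$. If $b = 0$, then $\chi$ is a faithful linear character of $H$, so $H$ is cyclic of order $p^2$; as $H = G/K$ is abelian, $G' \le K$, and the surjection $G/G' \to G/K \cong C_{p^2}$ shows that $G/G'$ has exponent at least $p^2$. If $b = 1$, then $|H| = p^3$ and $H$ is nonabelian since it carries the nonlinear character $\chi$; every nonabelian group of order $p^3$ satisfies $Z(H) = H' = \Phi(H)$ of order $p$ and is therefore extraspecial, so taking $N = K$ finishes this case.

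The only genuinely delicate point is the degree bound step: it is what rules out a codegree $p^2$ arising from a character of degree $p^2$ or larger and pins the faithful quotient $H$ down to order $p^2$ or $p^3$. Once $b \le 1$ has been established, recognizing the two resulting quotients (cyclic of order $p^2$, respectively extraspecial of order $p^3$) is routine $p$-group structure, and matching them to the two stated alternatives is immediate.
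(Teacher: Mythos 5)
Your proof is correct, and its overall case structure (linear versus nonlinear character in the forward direction, two explicit constructions in the reverse) matches the paper's, as the statement essentially forces. However, you justify the two key steps differently. In the forward direction, where the paper invokes Lemma 2.1 of Du--Lewis (the codegree of a nonlinear irreducible character of a \(p\)-group strictly exceeds its degree) to get \(\chi(1)=p\) immediately, you instead derive the bound from first orthogonality: \(\chi(1)^2 \le |H:Z(\chi)|\) together with the nontriviality of the center of the \(p\)-group \(H = G/\ker{\chi}\), which pins \(\chi(1)\le p\) and hence \(|H|\in\{p^2,p^3\}\). In the reverse direction, where the paper handles the exponent case by citing the contrapositive of Corollary 2.5 of Du--Lewis (\(p^2\notin\cod{G}\) forces \(G/G'\) elementary abelian), you explicitly build a normal subgroup \(M\ge G'\) with \(G/M\) cyclic of order \(p^2\) and inflate a faithful linear character. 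What your route buys is self-containment: the lemma then rests only on first orthogonality, Isaacs-style basic facts about \(Z(\chi)\) and kernels, and the classification of nonabelian groups of order \(p^3\) as extraspecial, with no dependence on the codegree literature. What the paper's route buys is brevity and coherence with the rest of the article, which leans on those same Du--Lewis results repeatedly; in particular the paper's one-line treatment of the exponent case is shorter than your constructive argument, at the cost of quoting a result whose proof is external.
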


\begin{proof}
If \(G/G'\) has exponent at least \(p^2\), then \(p^2\in\cod{G}\), since otherwise \(G/G'\) is elementary abelian \cite[Corollary 2.5]{codandnil}. If \(N\lhd G\) such that \(G/N\) is extraspecial of order \(p^3\), then since \(G/N\) has nilpotence class 2, there exists \(\chi\in\irr{G/N}\) such that \(\chi(1)=p\). Since \(G'=Z\) and \(|Z|=p\), \(\chi\) must be faithful and hence \(\cod{\chi}=p^2\). 

Now assume \(p^2\in \cod{G}\). Let \(\chi\in \irr{G}\) have codegree \(p^2\). If \(\chi\) is linear, then \(p^2=|G:\ker{\chi}|\) and by Lemma 2.27 of \cite{thebook}, \(G/\ker{\chi}\) is cyclic. Since the kernel of any linear character contains \(G'\), we see that the exponent of \(G/G'\) is at least \(p^2\). Now assume \(\chi\) is not linear. By Lemma 2.1 of \cite{codandnil}, \(\chi(1)=p\). Hence, \(p^3=\cod{\chi}\chi(1)=|G:\ker{\chi}|\), which shows that \(G/\ker{\chi}\) is an extraspecial group of order \(p^3\). 
\end{proof}

We are aware of the existence of groups of order \(p^4\) with class \(2\) which do not have \(p^2\) as a codegree. For a particular example, we have the group listed in the Small Groups database of Magma \cite{magma} as SmallGroup\((3^4,14)\). For an arbitrary prime \(p\), let \(G\cong \langle x,y,z \mid a^p=b^p=c^{p^2}, [a,b]=c^p\rangle \). Here \(G\) is the central product of an extraspecial group of order \(p^3\) and \(\mathbb{Z}_{p^2}\). This group has order \(p^4\), and \(G'\) is the unique normal subgroup of order \(p\). Any non-linear irreducible character must be faithful of degree \(p\), and hence has codegree \(p^3\). As \(G/G'\) is elementary abelian, any linear character must have kernel of order at least \(p^3\), and hence has codegree at most \(p\).

As \(p^2\) is not always a codegree of \(G\) when \(|G|=p^5\) and \(G\) has class 2, it will  be useful to know something about the structure of \(G\) in that case. 

\begin{lem}
\label{p_fifth_class_2}
Let \(G\) be a \(p\)-group with order \(p^5\) and nilpotence class 2. If \(p^2\notin \cod{G}\), then \(\cod{G}=\{1,p,p^3\}\), and \(G\) is either extraspecial or has no faithful irreducible characters.
\end{lem}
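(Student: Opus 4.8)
The plan is to first extract structural information from the hypothesis and then compute every codegree. Since \(p^2\notin\cod{G}\), Corollary 2.5 of \cite{codandnil} gives that \(G/G'\) is elementary abelian, and because \(G\) has class \(2\) we have \(1\neq G'\le Z(G)\). I would record three consequences that drive everything. First, \(G/Z(G)\) is a quotient of \(G/G'\), hence elementary abelian, so it is an \(\mathbb{F}_p\)-vector space. Second, \(G'\) is generated by commutators \([x,y]\), each of which satisfies \([x,y]^p=[x^p,y]=1\) since \(x^p\in G'\le Z(G)\); thus \(G'\) is elementary abelian, and whenever \(Z(G)\) is cyclic this forces \(|G'|=p\) and \(|Z(G)|\le p^2\). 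Third, the commutator map induces an alternating \(\mathbb{F}_p\)-bilinear form on \(G/Z(G)\) with trivial radical, so when \(|G'|=p\) it is a nondegenerate alternating form and \(\dim_{\mathbb{F}_p}(G/Z(G))\) must be even, i.e. \(|G:Z(G)|\) is an even power of \(p\).

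Next I would compute \(\cod{\chi}\) for an arbitrary nonlinear \(\chi\in\irr{G}\). Writing \(\bar G=G/\ker{\chi}\), the character \(\chi\) is faithful and nonlinear on the class-\(2\) group \(\bar G\), so \(p^3\le|\bar G|\le p^5\) and \(\chi(1)^2\le|\bar G:Z(\bar G)|<|\bar G|\). From \(\cod{\chi}\cdot\chi(1)=|\bar G|\) together with \(p\le\chi(1)<|\bar G|^{1/2}\), one gets \(p^2\le\cod{\chi}\le p^4\). The hypothesis excludes \(\cod{\chi}=p^2\), so the only remaining task is to rule out \(\cod{\chi}=p^4\).

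This last case is the heart of the argument. A codegree of \(p^4\) forces \(\chi(1)=p\) and \(|\bar G|=p^5\), i.e. \(\chi\) is faithful on \(G\) itself of degree \(p\), whence \(Z(G)\) is cyclic. By the structural remarks this gives \(|G'|=p\) and \(|Z(G)|\le p^2\), so \(|G:Z(G)|\in\{p^3,p^4\}\). The parity constraint eliminates \(|G:Z(G)|=p^3\); and \(|G:Z(G)|=p^4\) means \(Z(G)=G'\) has order \(p\), so \(G\) is extraspecial of order \(p^5\), whose nonlinear irreducible characters all have degree \(p^2\) — contradicting \(\chi(1)=p\). Hence \(\cod{\chi}=p^3\) for every nonlinear \(\chi\). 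Since \(G\) is nonabelian it has such a character, and since \(G/G'\) is nontrivial elementary abelian it has nontrivial linear characters, which have codegree \(p\); with the trivial character this yields \(\cod{G}=\{1,p,p^3\}\).

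Finally I would settle the dichotomy with the same tools. Suppose \(G\) has a faithful irreducible character; then \(Z(G)\) is cyclic, so \(|G'|=p\) and \(|Z(G)|\le p^2\). If \(|Z(G)|=p^2\) then \(|G:Z(G)|=p^3\) is an odd power of \(p\), contradicting the parity of the nondegenerate commutator form; hence \(|Z(G)|=p\), so \(Z(G)=G'\) has order \(p\) and \(G\) is extraspecial. Equivalently, if \(G\) is not extraspecial it has no faithful irreducible character. I expect the parity argument for the commutator form to be the main obstacle: recognizing that nondegeneracy of an alternating form forces an even-dimensional \(G/Z(G)\), and that this single observation simultaneously kills the codegree-\(p^4\) case and forces extraspeciality. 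Everything else is bookkeeping with the relation \(\cod{\chi}\cdot\chi(1)=|G:\ker{\chi}|\) and Corollary 2.5 of \cite{codandnil}.
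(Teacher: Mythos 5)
Your proof is correct, and while it follows the same skeleton as the paper's — bound the codegree of a nonlinear character, eliminate \(p^4\), conclude \(\cod{G}=\{1,p,p^3\}\), then show a faithful irreducible character forces extraspeciality — it replaces the paper's key tool at both elimination steps. The paper invokes Lemma 2.31 of \cite{thebook}: a faithful \(\chi\in\irr{G}\) on a class-2 group satisfies \(\chi(1)^2=|G:Z(G)|\). In the codegree-\(p^4\) case this instantly yields \(|Z(G)|=p^3\), contradicting the fact that \(Z(G)/G'\) is cyclic inside the elementary abelian \(G/G'\); in the dichotomy it yields \(|G:Z(G)|=p^4\), hence \(Z(G)=G'\) directly. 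You substitute two facts for this lemma: the parity constraint from the commutator form (a nondegenerate alternating \(\mathbb{F}_p\)-form on \(G/Z(G)\) forces \(|G:Z(G)|\) to be an even power of \(p\)), and the classical fact that nonlinear irreducible characters of an extraspecial group of order \(p^5\) have degree \(p^2\). Both substitutes are sound: your verification that the form is well defined, alternating, and has trivial radical is right, and your derivation of \(G'\) elementary abelian from \([x,y]^p=[x^p,y]=1\) is cleaner than the paper's citation. The paper's route buys brevity, since one citation pins down \(|Z(G)|\) exactly in both places; your route buys a more elementary, reusable structural fact, and in the dichotomy it is actually leaner — "faithful \(\Rightarrow Z\) cyclic \(\Rightarrow |G'|=p \Rightarrow\) parity \(\Rightarrow |Z|=p\)" never touches character degrees. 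The one place you quote an unproved classical fact (extraspecial degrees) is standard, though its usual proof is precisely the Isaacs lemma you are avoiding, so strictly speaking you have relocated that dependency rather than eliminated it.
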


\begin{proof}
Let \( \chi\in \irr{G} \) with \(\cod{\chi}>p\), and notice that by Lemma \ref{nonlinear} \(\chi\) is non-linear. Then \(\chi(1)\cod{\chi}\le |G|=p^5\) implies \( p^3 \le \cod{\chi}\le p^4\). If \(\cod{\chi}=p^4\), then \(\chi\) is faithful and \(Z\) is cyclic. Since \(G\) has class 2, by Lemma 2.31 of \cite{thebook} we have \( |G:Z|=\chi(1)^2=p^2\), and hence \(|Z|=p^3\). Also notice that \(G'\) is contained in \(Z\), and \(p^2\notin \cod{G}\) implies \(G'\) is elementary abelian. Since \(Z\) is cyclic and contains the elementary abelian subgroup \(G'\), we have \(|G'| = p\). Then \(G/G'\), which is also elementary abelian, contains the cyclic subgroup \(Z/G'\) with order \(p^2\), which is impossible, so \(p^4\notin \cod{G} \). Since \(G\) is not elementary abelian, \(\cod{G}\ne \{1,p\}\) by Lemma 2.4 of \cite{codandnil}. Hence we must have \(p^3\in\cod{G}\), so \(\cod{G}=\{1,p,p^3\}\). 

The linear characters of \(G\) are not faithful, as \(G\) has nilpotence class 2 and hence \(|G'|>1\). Suppose \(\chi \in \irr{G}\) is faithful and note that \(\cod{\chi}=p^3\). Then \(\chi(1)\cod{\chi}=|G|\) implies \(\chi(1) =p^2\) and \(|G:Z|=p^4\). Since \(\langle 1\rangle <G'\le Z\), we have \(G'=Z\) and \(G\) is extraspecial. 
\end{proof}

Both cases of Lemma \ref{p_fifth_class_2} can occur: the extraspecial groups are well known, and the groups identified by Magma \cite{magma} as SmallGroup\((3^5,i)\) for \(i=44,45,64,65,\) and \(66\) offer particular examples with no faithful irreducible characters. In general, consider \(H\cong G\times \mathbb{Z}_p\) where \(G\) is the central product described in the discussion preceding Lemma \ref{p_fifth_class_2}. This group has order \(p^5\), and no faithful irreducible characters. Nonlinear irreducible characters will have degree \(p\) and kernel of size \(p\), giving \(p^3\) as a codegree. Linear characters will have kernel of size at least \(p^4\), as \(H/H'\) is elementary abelian. 

The following is Lemma 2.3 of \cite{memax}, which will be needed for several of the remaining results. This lemma follows from the fact that a maximal class \(p\)-group has a quotient which is extraspecial of order \(p^3\). This quotient has a faithful non-linear irreducible character of degree \(p\), and the codegree of this character is \(p^2\).

\begin{lem}
\label{maxclass}
If \(G\) is a \(p\)-group that has maximal class, then \(p^2\in\cod{G}\).
\end{lem}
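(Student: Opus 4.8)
The plan is to exhibit a normal subgroup \(N\lhd G\) for which \(G/N\) is extraspecial of order \(p^3\), and then to quote Lemma \ref{p_squared_iff}, which immediately yields \(p^2\in\cod{G}\). Equivalently, such a quotient carries a faithful non-linear irreducible character \(\chi\) of degree \(p\); inflating \(\chi\) to \(G\) gives an irreducible character with \(|G:\ker{\chi}|=p^3\) and \(\chi(1)=p\), hence \(\cod{\chi}=p^2\). So the whole problem reduces to producing the extraspecial quotient, exactly as indicated in the remark preceding the statement.

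To find \(N\), I would take \(N=\gamma_3(G)\), the third term of the lower central series \(G=\gamma_1(G)\ge\gamma_2(G)\ge\cdots\). Write \(|G|=p^n\), so that maximal class means \(\c{G}=n-1\) and the lower central series has exactly \(n-1\) proper factors \(\gamma_1/\gamma_2,\dots,\gamma_{n-1}/\gamma_n\). Each factor has order at least \(p\), and \(\gamma_1/\gamma_2=G/G'\) has order at least \(p^2\), since a \(p\)-group whose abelianization is cyclic is itself cyclic and \(G\) is nonabelian. As the exponents of these factor orders sum to \(n\) and there are \(n-1\) of them with the first at least \(2\), they must be \(p^2,p,p,\dots,p\). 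In particular \(|G/\gamma_3(G)|=p^2\cdot p=p^3\), and \((G/\gamma_3(G))'=\gamma_2(G)/\gamma_3(G)\) has order \(p\), so \(G/\gamma_3(G)\) is nonabelian of order \(p^3\).

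Finally I would use the standard fact that every nonabelian group of order \(p^3\) is extraspecial: its center coincides with its derived subgroup and has order \(p\), while the quotient by the center is elementary abelian of order \(p^2\). This identifies \(G/\gamma_3(G)\) as extraspecial of order \(p^3\), completing the reduction. (When \(n=3\) the step is vacuous: \(\gamma_3(G)=1\) and \(G\) is itself extraspecial.)

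There is no serious obstacle here; the only point requiring care is the structural counting step, namely that maximal class forces all lower central factors below \(G'\) to have order exactly \(p\). This is the single place where the maximal class hypothesis (coclass \(1\)) is actually used, and everything else is the elementary theory of groups of order \(p^3\) together with Lemma \ref{p_squared_iff}.
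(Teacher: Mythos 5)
Your proof is correct and takes essentially the same approach as the paper, which states this lemma as Lemma 2.3 of \cite{memax} and sketches exactly your argument: a maximal class \(p\)-group has an extraspecial quotient of order \(p^3\), and the faithful degree-\(p\) irreducible character of that quotient, inflated to \(G\), has codegree \(p^2\). The only material you add is the explicit verification, via the count of lower central factors, that \(G/\gamma_3(G)\) is such a quotient---a detail the paper delegates to the cited reference.
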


Lemma \ref{p_squared_in_p_fifth_class_3} is the next step toward investigating the connection between the order of a \(p\)-group, it's nilpotence class, and the presence of \(p^2\) as a codegree. 

\begin{lem} 
\label{p_squared_in_p_fifth_class_3}
If \( G \) is a group with order \(p^5\) and nilpotence class at least \( 3 \), then \( p^2 \in \text{cod} (G) \). 
\end{lem}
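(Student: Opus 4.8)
The plan is to prove that a $p$-group $G$ of order $p^5$ with nilpotence class at least $3$ must have $p^2$ as a codegree. By Lemma~\ref{p_squared_iff}, it suffices to show that either the exponent of $G/G'$ is at least $p^2$, or $G$ has a normal subgroup $N$ with $G/N$ extraspecial of order $p^3$. I would argue by contradiction: assume $p^2 \notin \cod{G}$, so that (by \cite[Corollary 2.5]{codandnil}) $G/G'$ is elementary abelian, and (by Lemma~\ref{p_squared_iff}) $G$ has no quotient that is extraspecial of order $p^3$. The goal is to derive a contradiction from these constraints together with the order and class hypotheses.

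First I would handle the class $4$ case quickly. If $G$ has class $4$ and order $p^5$, then $G$ has maximal class, and Lemma~\ref{maxclass} immediately gives $p^2 \in \cod{G}$. So the real work is the class $3$ case, where $|G| = p^5$ and the nilpotence class is exactly $3$. Here I would invoke Lemma~\ref{lewis_bound_z}: a class-$3$ group is neither maximal class (that would force class $4$ at this order) — more carefully, maximal class of order $p^5$ means class $4$, so $G$ is not of maximal class — and is not extraspecial (extraspecial groups have class $2$). Thus the lemma forces $|Z_{n-1}| = |Z_2(G)| \ge p^n = p^3$. Combined with the lower central series constraints ($|Z_1| = |Z(G)| \ge p$, and the class being $3$), this pins down the sizes of the upper central series terms fairly tightly: I expect $|Z(G)| = p$, $|Z_2(G)| = p^3$ (or $p^4$), and correspondingly strong control over $G'$ and $G/G'$.

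The key structural step is to analyze $G/G'$ and the quotients of $G$ of order $p^3$. Since $G/G'$ is elementary abelian (from $p^2 \notin \cod G$), and since $G$ has class $3$ so $|G'| \ge p^2$, we get $|G/G'| \le p^3$, hence $|G/G'| = p^2$ or $p^3$. I would then look for a normal subgroup $N$ of index $p^3$ with $G/N$ extraspecial. The natural candidate is $N = Z(G/G_3)$ pulled back, or a suitable subgroup making $G/N$ a class-$2$ quotient with commutator subgroup and center both of order $p$; such a quotient of a class-$3$ group of this shape should exist precisely because $G/G_3$ has class $2$ and one can find a subgroup of its center giving an extraspecial quotient of order $p^3$. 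The contradiction then comes from showing such an extraspecial quotient is unavoidable given the tightly constrained upper central series sizes.

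The main obstacle will be ruling out the degenerate configurations where $G/G'$ is elementary abelian of order $p^2$ or $p^3$ yet $G$ nonetheless admits \emph{no} extraspecial quotient of order $p^3$ — in other words, showing that the assumption $p^2 \notin \cod G$ is genuinely inconsistent with class $3$ at order $p^5$. I expect this to require a careful examination of the possible isomorphism types: one must show that whenever $|G'| \ge p^2$ and $G/G'$ is elementary abelian with $|Z_2(G)| \ge p^3$, either some maximal subgroup of $Z_2(G)$ containing $G'$ but not all of $Z_2(G)$ yields the desired extraspecial quotient, or the exponent forces $p^2$ into the codegrees another way. Handling the interplay between the size of $G'$, the size of $Z_2(G)$, and the existence of the right normal subgroup $N$ is where the essential combinatorial/structural argument lies; the class-$4$ and extraspecial cases are disposed of immediately by the earlier lemmas.
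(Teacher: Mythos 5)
Your setup is fine (the class-$4$ case via Lemma~\ref{maxclass}, the reduction to class $3$, the contradiction framework via Lemma~\ref{p_squared_iff} and \cite[Corollary 2.5]{codandnil}, and the appeal to Lemma~\ref{lewis_bound_z}), but the core of the proof is missing: everything after that is a description of what would need to be shown, not an argument. Your plan is to exhibit an extraspecial quotient of order $p^3$, and you concede in the final paragraph that ruling out the ``degenerate configurations'' where no such quotient exists is the essential difficulty --- but that is exactly the content of the lemma, and no mechanism for doing it is given. Indeed the paper's own example preceding Lemma~\ref{p_fifth_class_2} (the central product of an extraspecial group of order $p^3$ with $\mathbb{Z}_{p^2}$) shows that a class-$2$ group with $G/G'$ elementary abelian can fail to have any extraspecial quotient of order $p^3$, so no soft argument from ``$G/G_3$ has class $2$'' can produce one; the class-$3$ hypothesis must enter in a concrete way. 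Also, your ``natural candidate'' $N = $ the preimage of $Z(G/G_3)$ cannot work: since $G/G_3$ has class $2$, the quotient $(G/G_3)/Z(G/G_3)$ is abelian, hence never extraspecial.

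For comparison, the paper's proof does not hunt for an extraspecial quotient at all; it derives a direct contradiction from numerical and commutator constraints. First, $|Z_2|=p^3$ (your alternative $|Z_2|=p^4$ is impossible, since $G/Z_2 \cong (G/Z)/Z(G/Z)$ would then be cyclic and nontrivial). Next, $|Z|=p^2$ is excluded by counting: a nonlinear $\chi \in \irr{G/Z}$ has $\cod{\chi}\ge p^3$ and $\chi(1)\ge p$, so $|G:\ker{\chi}| \ge p^4 > |G:Z| = p^3$. So $|Z|=p$, and the same count forces nonlinear characters of $G/Z$ to be faithful, whence $Z_2/Z$ is cyclic. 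Then $Z = G_3$, and since $G'/G_3$ is elementary abelian while $Z_2/Z$ is cyclic, $|G'|=p^2$. Finally, writing $Z_2 = \langle a, Z\rangle$, the identity $[a^p,g]=[a,g]^p=1$ (valid because $[a,g]\in Z$ and $|Z|=p$) forces $a^p \in Z$, so $G' = \langle a^p, Z\rangle = Z$, contradicting $|G'|=p^2$. These steps --- the exclusion of $|Z|=p^2$, the cyclicity of $Z_2/Z$, the pinning of $|G'|$, and the commutator identity giving the final contradiction --- are the substance of the proof, and your proposal contains none of them.
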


\begin{proof} If \(G\) has nilpotence class 4, then by Lemma \ref{maxclass}, \(p^2\in\cod{G}\). Thus we may assume \(G\) has class 3. Suppose \( p^2\notin \text {cod}(G) \). Since \( G \) does not have maximal class and is not extraspecial, we know by Lemma \ref{lewis_bound_z} that \( |Z_2|= p^3 \). Suppose \( |Z|=p^2 \).  Let \( \chi \in \text {Irr}(G/Z) \) be non-linear. By Lemma 2.1 of \cite{codandnil},  \( \chi(1)<\text{cod}(\chi) \), which implies \(\text{cod}(\chi) \ge p^3\). Thus \( p^4 \le \chi(1)\text{cod}(\chi)=|G:\text{ker}(\chi)| \le |G:Z|=p^3\),
which is impossible. Therefore \( |Z|=p\), and \( p^4\le \chi(1)\text{cod}(\chi) = |G:\text{ker}(\chi)|\le |G:Z|= p^4 \) shows that \(\chi\) is faithful and hence \( Z_2/Z \) is cyclic. As \( |Z|=p \), we have \( Z=[G',G] < G' \le Z_2 \).  Since \( Z_2/Z \) is cyclic, while \(G'/[G',G]\) is elementary abelian, we have \( |G'|=p^2 \). 

Let \( Z_2=\langle a,Z \rangle \). For any \( g\in G \), \([a,g]\in Z \), so 
\(1 =  [a,g]^p=[a^p,g]\), which implies \( a^p \in Z \). As \( G'=\langle a^p, Z \rangle \), this implies \(G'=Z \), a contradiction. Hence \(p^2\in\cod{G}\).
\end{proof}

We can now prove the first half of Theorem \ref{coclass} using induction, with Lemma \ref{p_squared_in_p_fifth_class_3} as the base case.


\begin{proof}[Proof of Theorem \ref{coclass} \ref{coclass_2}]
Induct on \(|G|\). Lemma \ref{p_squared_in_p_fifth_class_3} establishes the base case where \(|G|=p^5\), so assume \(|G|=p^{n+2}\). Since \(G\) has coclass 2, the nilpotence class of \(G\) is \(n\), the class of \(G/Z\) is \(n-1\),  and \(Z\) has order at most \(p^2\). If \(|Z|=p^2\), then \(|G/Z|=p^n\) and by  Lemma \ref{maxclass}, \(p^2\in\cod{G/Z}\). If \(|Z|=p\), then \(|G/Z|=p^{n+1}\), so \(G/Z\) has coclass 2 and by the inductive hypothesis, \(p^2\in\cod{G/Z}\).
\end{proof}

If we increase \(|G|\) in Lemma \ref{p_squared_in_p_fifth_class_3} to \(p^6\), the result \(p^2\in\cod{G}\) will still hold. There are examples of groups of order \(p^6\) with class 2 where \(p^2\notin\cod{G}\), e.g. semi-extraspecial groups. In these groups, \(G'=Z\) and \(|G'|^2\le |G:G'|\) \cite{beisiegel}. Thus if \(|G|=p^6\) and \(G\) has class 2, we have  \(|Z|=|G'|=p^2\). In \cite{verardi}, it is noted that \(G/G'\) is elementary abelian. If a linear character \(\lambda\in\irr{G}\) has codegree 2, then \(G/\ker{\lambda}\) is  a cyclic quotient of order \(p^2\), which is impossible since the kernel of a linear character contains \(G'\). If \(\chi\in\irr{G}\) is nonlinear, then \(\chi(1)=p^2\), (see, for example, \cite{extreme_degrees}), so \(\cod{\chi}>p^2\), and we have \(p^2\notin\cod{G}\). 

The proof of Lemma \ref{p_squared_in_p_sixth} makes use of characterizations found in \cite{extreme_degrees}, called the strong and weak conditions. If for any \(N\unlhd G\), either \(G'\le N\) or \(N\le Z\), a \(p\)-group \(G\) is said to satisfy the strong condition. If we replace the requirement that \(N\le Z\) with \(|NZ:Z|\le p\), \(G\) is said to satisfy the weak condition. 

\begin{lem}
\label{p_squared_in_p_sixth}
If \( G \) is a \( p \)-group with \(|G|=p^6\) and nilpotence class at least  3, then \( p^2 \in \text{cod} (G) \).  
\end{lem}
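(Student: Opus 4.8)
The plan is to induct on the nilpotence class, reducing to smaller orders or to extraspecial quotients wherever possible, and then to isolate a single recalcitrant configuration that must be handled by hand. If \(G\) has class \(5\) it has maximal class, so \(p^2\in\cod{G}\) by Lemma \ref{maxclass}. If \(G\) has class \(4\) it has coclass \(2\) and order \(p^6\ge p^5\), so part \ref{coclass_2} of Theorem \ref{coclass} gives \(p^2\in\cod{G}\) directly. The substance is therefore the class-\(3\) case, which I would attack by contradiction: assume \(p^2\notin\cod{G}\). By Corollary 2.5 of \cite{codandnil} the quotient \(G/G'\) is elementary abelian, and by Lemma \ref{p_squared_iff} no quotient of \(G\) is extraspecial of order \(p^3\). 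Since \(G\) neither has maximal class nor is extraspecial, Lemma \ref{lewis_bound_z} (with \(n=3\)) yields \(|Z_2|\ge p^3\). As \(G\) has class exactly \(3\), the subgroup \(\gamma_3:=[G',G]\) is nontrivial and contained in \(Z\); and since \(G/Z\) is nonabelian of order at least \(p^3\), we have \(|Z|\in\{p,p^2,p^3\}\).

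Next I would exploit quotients to shrink the problem. If there is a normal subgroup \(N\) of order \(p\) with \(\gamma_3\not\le N\), then \(G/N\) has order \(p^5\) and \(\gamma_3 N/N\ne 1\), so \(G/N\) still has class at least \(3\); Lemma \ref{p_squared_in_p_fifth_class_3} then gives \(p^2\in\cod{G/N}\subseteq\cod{G}\), a contradiction. Every normal subgroup of order \(p\) is central, so the failure of this step forces \(|\gamma_3|=p\) and forces every order-\(p\) subgroup of \(Z\) to contain (hence equal) \(\gamma_3\); that is, \(Z\) has a unique subgroup of order \(p\), so \(Z\) is cyclic. Moreover, if \(|Z|=p^3\) then \(G/Z\) is a nonabelian group of order \(p^3\), hence extraspecial of order \(p^3\), contradicting Lemma \ref{p_squared_iff}. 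I am therefore reduced to the single configuration in which \(Z\) is cyclic of order \(p\) or \(p^2\) and \(|\gamma_3|=p\).

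In this remaining configuration I would pass to \(\bar G:=G/\gamma_3\), which has order \(p^5\) and class \(2\). Since \(\cod{\bar G}\subseteq\cod{G}\), we have \(p^2\notin\cod{\bar G}\), so Lemma \ref{p_fifth_class_2} applies: \(\cod{\bar G}=\{1,p,p^3\}\) and \(\bar G\) is extraspecial or has no faithful irreducible character. An extraspecial \(\bar G\) would force \(|Z(\bar G)|=p\), which (using \(|Z_2|\ge p^3\) together with \(\gamma_3\ne 1\)) forces either \(|Z(\bar G)|\ge p^2\) or \(G'=Z\), and hence \(\gamma_3=1\); both are impossible. Thus \(\bar G\) has no faithful irreducible character, so \(Z(\bar G)\) is noncyclic and every nonlinear irreducible character of \(\bar G\) has degree \(p\). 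At this point I would bring in the strong and weak conditions of \cite{extreme_degrees}: the character data of \(\bar G\) pins down the lattice of normal subgroups relative to \(G'\) and \(Z\), and I would combine this with the facts that \(Z\) is cyclic (so \(G\) itself admits a faithful irreducible character, whose degree and codegree are then constrained), that \(|Z_2|\ge p^3\), and that \(G/G'\) is elementary abelian, to force either the exponent of \(G/G'\) up to \(p^2\) or the existence of the forbidden extraspecial quotient of order \(p^3\), contradicting the assumption in either case.

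The main obstacle is exactly this last configuration, where \(Z\) is cyclic and \(\gamma_3\) has order \(p\). Here there is no central subgroup of order \(p\) to factor out that both lowers the order to \(p^5\) and preserves class \(3\), so the clean inductive reductions are unavailable; and because \(Z\) is cyclic, \(G\) genuinely possesses faithful irreducible characters, so one cannot argue by ruling them out. The real work is the bookkeeping that reconciles the degree pattern forced on \(\bar G\) by Lemma \ref{p_fifth_class_2}, the lower bound \(|Z_2|\ge p^3\), and the elementary-abelian structure of \(G/G'\); this is where the strong and weak conditions of \cite{extreme_degrees} do the heavy lifting.
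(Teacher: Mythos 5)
Your handling of classes \(4\) and \(5\) matches the paper, and your class-\(3\) reductions are sound; in fact your route to ``\(Z\) is cyclic and \(|Z|\le p^2\)'' is more elementary than the paper's. (The paper gets a faithful character by showing codegree-\(p^4\) and codegree-\(p^5\) characters must be faithful and then invoking \(|\cod{G}|\ge 4\), whereas you simply factor out an order-\(p\) central subgroup not containing \(G_3=[G',G]\) and apply Lemma \ref{p_squared_in_p_fifth_class_3}; this is a genuinely cleaner reduction.) Your eliminations of \(|Z|=p^3\) and of an extraspecial \(G/G_3\) are also correct.

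The gap is that the argument stops exactly where the real work begins, and what remains is not routine. In the surviving configuration (\(|G_3|=p\), \(Z\) cyclic of order \(p\) or \(p^2\), \(|Z_2|\ge p^3\)) you say you \emph{would} invoke the strong and weak conditions of \cite{extreme_degrees} and \emph{combine} the standing facts \emph{to force} a contradiction, but no contradiction is derived, and this configuration is precisely what occupies the bulk of the paper's proof, split into the cases \(|Z|=p^2\) and \(|Z|=p\). In the first case the paper needs \(Z(\chi)=Z_2\) with \(|Z_2|=p^4\) (Isaacs, Lemmas 2.27 and 2.31 of \cite{thebook}), a commutator computation ruling out \(|Z(G/G_3)/G_3|=p^3\), Theorem 2.4 of \cite{extreme_degrees} to identify \(G'\) with the preimage of \(Z(G/G_3)\), a verification of the weak condition (which itself rests on showing every kernel of a nonfaithful nonlinear character has order \(p^2\) and contains \(G_3\)), and finally Theorem 5.2 of \cite{extreme_degrees} played against the cyclicity of \(Z_2/Z\) from Lemma 2.27(d) of \cite{thebook}. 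In the second case it needs a separate commutator argument to pin \(|Z_2|=p^3\), then \(|G'|=p^3\) so that \(G'=Z_2\), then Theorem C of \cite{extreme_degrees} to conclude \(\cd{G}=\{1,p^2\}\), contradicted by the degree-\(p\) character attached to a codegree-\(p^3\) kernel of order \(p^2\). None of these steps appears in your sketch; in particular, verifying the weak condition is not automatic but requires the kernel-size bookkeeping above, and your proposed endpoint (forcing exponent \(p^2\) on \(G/G'\) or an extraspecial quotient of order \(p^3\)) is not the form the contradiction takes in either of the paper's cases, so there is no evidence the sketch closes as stated.
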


\begin{proof} 
If \(G\) has nilpotence class \(4\) or \(5\), we have \(p^2\in\cod{G}\) by Theorem \ref{coclass} \ref{coclass_2} and Lemma \ref{maxclass}, respectively. Thus we may assume \(G\) has class 3, and suppose \( p^2 \notin \text{cod}(G) \). The possible codegrees of non-linear characters of \( G \) are \(p^3\), \(p^4\), and \(p^5 \). If \(\varphi\in \text{Irr}(G) \) has codegree \(p^5\), then \(p^6 \le \varphi(1)\text{cod}(\varphi) = |G:\text{ker}(\varphi)| \le |G| = p^6 \), so \( \varphi \) is faithful. If \(\mu \in \text{Irr}(G) \) has codegree \( p^4 \), then \( p^5 = \mu(1)\text{cod}(\mu) = |G:\text{ker}(\mu)| \le p^6 \), so \( \mu \) is faithful or \( |\ker{\mu} |=p \). In the latter case, since \( G \) has class 3, we have \( |G'|\ge p^2 \), and hence \( G/\ker{\mu}\) is nonabelian with class 2 or class 3. If \( G/\ker{\mu}\) has class 3, then Lemma \ref{p_squared_in_p_fifth_class_3} implies \( p^2 \in \cod{G/\ker{\mu}}\), contradicting \( p^2 \notin \cod{G}  \). On the other hand, if \(G/\ker{\mu} \) has class 2, then \(p^4\in \cod{G/\ker{\mu}} \) is impossible by Lemma \ref{p_fifth_class_2}. Thus \(\mu \) must be faithful. At least one of \(p^4\) and \(p^5\) must in \(\cod{G}\), as \(|\cod{G}|\ge 4\) by Theorem 1.2 of \cite{codandnil}. Thus, \(G\) has a faithful irreducible character, and hence \(Z\) is cyclic.

Since \(G\) has class 3, we know that \(G'\not\le Z\), so \(Z\) cannot be realized as the intersection of kernels of only linear characters of \(G\). Therefore \(Z\) must be contained in the kernel of one or more non-linear irreducible characters of \(G\). Since such a character is  clearly not faithful, it must have codegree \(p^3\). Let \(\chi\in\irr{G}\) be one such character. Then \( p^4 \le \chi(1)\cod{\chi} = |G:\ker{\chi}|\le |G:Z| \le p^5 \). which shows that \(|Z|=p\) or \(p^2 \). 
\vspace{3mm}

\emph{Case 1.} Assume \(|Z|=p^2\). Put \(K=\ker{\chi}\) and notice that by the above inequality we now have \(K=Z\). Since \(G_3\) is elementary abelian and contained in \(Z\), which is cyclic, we have \(|G_3|=p\). By Lemmas 2.27 (f) and 2.31 of \cite{thebook}, \(Z(\chi)=Z_2\), and \(|Z_2|=p^4\). 

Let \(Z(G/G_3)=X/G_3\) and notice that \(Z\le X \le Z_2 \). If \(G/G_3\) is extraspecial, then \(X/G_3=(G/G_3)'=G'/G_3\) implies that \(X=G'\) has order \(p^2\), and thus \(X=Z\). This is impossible as \(G\) has class 3 and hence \(G'\neq Z\). By Lemma \ref{p_fifth_class_2}, we now have that \(G/G_3\) has no faithful irreducible characters, thus \(X/G_3\) cannot be cyclic, so \(|X/G_3|=p^2 \) or \(p^3\). 

If \(|X/G_3|=p^3\), then \(X=Z_2\), and \(|G:X|=p^2\) shows that \(G/G_3\) has exactly two noncentral generators. Thus \(|G'|=p^2\). Let \(Z_2=\langle a,z\rangle \) where  \(Z=\langle z \rangle\), and notice that \(a\) and \(z\) each have order \(p^2\). Since \(G/G'\) is elementary abelian, \(g^p\in G'\) for every \(g\in G\). Hence \(G'=\langle a^p , z^p \rangle \). As \(a^p\notin Z\), there exists some \(g\in G \) such that \(1\ne [a^p,g]\), and since \(a\in Z_2\), we have \([a,g]\in Z\) which implies \([a^p,g]=[a,g]^p\). Thus \([a,g]^p\neq 1\), so \([a,g]\) is an element of \(Z\) with order greater than \(p\) and therefore generates \(Z\). Since \([a,g]\) is also an element of \(G'\), this shows that \(Z\le G'\), which is impossible as they have the same order but cannot be equal. Thus we may assume that \(|X/G_3|=p^2\). 

By \cite[Theorem 2.4]{extreme_degrees}, we know that \(|G'/G_3|\ne p\), so \(G'=X\). Recall that since \(Z\) is cyclic, \(G_3\) is the unique normal subgroup of \(G\) with order \(p\), and hence the kernel of any nonfaithful irreducible character of \(G\) must contain \(G_3\). Any such character which is also nonlinear must have codegree \(p^3\), and since \(G/G_3\) has no faithful irreducible characters, its kernel must have order \(p^2\). Thus any nontrivial normal subgroup of G which does not contain \(G'\) is either the kernel of an irreducible character of \(G\) with codegree \(p^3\), having order \(p^2\) and containing \(G_3\), or an intersection of such kernels and therefore equaling \(G_3\). Hence \(G\) satisfies the weak condition, and by Theorem 5.2 of  \cite{extreme_degrees}, \(Z_2/Z\) cannot be cyclic of order \(p^2\). This is a contradiction since \(Z(\chi)/K=Z_2/Z\) is cyclic by Lemma 2.27 (d) of \cite{thebook}. 

\vspace{3mm}
\emph{Case 2.} Assume \( |Z|=p \). Let \( \chi\in\irr{G} \) have codegree \( p^3 \) and put \(K=\ker{\chi}\). If \(|K|=p \) then by Lemma \ref{p_fifth_class_2}, \(G/Z\) is extraspecial. The only capable extraspecial group has order \(p^3 \)  \cite[Cor. 8.2]{capable_extra_special}, so this is impossible, and we may assume that \(|K|=p^2 \). By Lemma \ref{lewis_bound_z}, we have \( |Z_2|\ge p^3 \). Suppose \(|Z_2|=p^4 \). Since \(Z(\chi)\ge Z_2 \), Corollary 2.30 of \cite{thebook} implies \(Z(\chi)=Z_2\).  Since \(|K:Z|=p\), and \(K/Z\) must intersect nontrivially with \(Z_2/Z\), we have that \(K\le Z_2\).  By Lemma 2.27 (d) of \cite{thebook}, \(Z(\chi)/\text{ker}(\chi)=Z_2/K\) is cyclic. Let \(Z_2=\langle a,K\rangle\). Since \(|Z_2:K|=p^2\), we have \(a^p\notin K> Z\), and hence there exists some \(g\in G\) such that \([a^p,g]\ne 1\). As \(a\in Z_2 \), we have \([a,g]\in Z\), so \(1\ne [a^p,g]=[a,g]^p\in Z\). This is impossible since \([a,g]\in Z\) and \(|Z|=p\), thus we may assume \(|Z_2|=p^3\). 

The order of \(G'\) is now either \(p^2 \) or \(p^3\). Let \(Z(\chi)=\langle a,K \rangle\). Observe that \([Z(\chi),G]\) is contained in both \(K\) and \(G'\). If \(|G'|=p^2\), then \(K\cap G'=Z\), giving \([Z(\chi),G]\le Z\), and hence \([a,g]\in Z \) for all \(g\in G\). Since \(a^p\notin Z\), we can find some \(g\in G\) such that \([a^p,g]\ne 1\). As before, we have \(1\ne [a^p,g]=[a,g]^p\), which is impossible since \(|Z|=p\). Thus \(|G'|=p^3\) and we have \(G'=Z_2\).

Recall that the only non-faithful irreducible characters of \(G\) are either linear, or have codegree \(p^3\) and kernel of order \(p^2\). Again, \(G\) satisfies the weak condition and Theorem C of \cite{extreme_degrees} implies \(\cd{G}=\{1,p^2\}\). Since \(G\) has at least one irreducible character \(\chi\) with codegree \(p^3\) and \(|\ker{\chi}|=p^2\), we must have \(p\in \cd{G}\), which is a contradiction. 
\end{proof}

Lemma \ref{p_squared_in_p_sixth} provides the base case for the induction used to prove the second half of Theorem \ref{coclass}.


\begin{proof}[Proof of Theorem \ref{coclass} \ref{coclass_3}]
Induct on \(|G|\). The base case \(|G|=p^6\) is established by Lemma \ref{p_squared_in_p_sixth}, so we may assume \(|G|=p^{n+3}\). As \(G\) has coclass 3, the order of \(Z\) is at most \(p^3\). When \(|Z|=p^3\), \(G/Z\) has maximal class and \(p^2\in\cod{G/Z}\) by Lemma \ref{maxclass}. When \(|Z|=p^2\), \(G/Z\) has coclass 2 and \(p^2\in\cod{G/Z}\) by Theorem \ref{coclass} \ref{coclass_2}. The final possibility is \(|Z|=p\), in which case \(G/Z\) has coclass 3, and by the inductive hypothesis, \(p^2\in\cod{G/Z}\).
\end{proof}

The following is an easy corollary of Lemma \ref{maxclass} and Theorem \ref{coclass}.

\begin{cor}
\label{p_seventh}
If \(G\) is a group with order \(p^7\) and nilpotence class at least \(4\), then \(p^2\in\cod{G}\).
\end{cor}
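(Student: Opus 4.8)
The plan is to reduce everything to the coclass invariant and then dispatch each case to a result already proved. Since $|G|=p^7$, writing $c$ for the nilpotence class we have coclass equal to $7-c$, and a nontrivial $p$-group of order $p^7$ has class at most $6$. So the hypothesis $c\ge 4$ leaves only the three possibilities $c\in\{4,5,6\}$, corresponding to coclass $3$, $2$, and $1$ respectively. The whole argument is therefore just a three-way case split, with each branch handled by a cited result.

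First I would treat $c=6$: here $G$ has maximal class, so $p^2\in\cod{G}$ is immediate from Lemma \ref{maxclass}. Next, for $c=5$ the coclass is $2$, and since $|G|=p^7\ge p^5$ the conclusion $p^2\in\cod{G}$ follows from Theorem \ref{coclass} \ref{coclass_2}. Finally, for $c=4$ the coclass is $3$, and since $|G|=p^7\ge p^6$ we apply Theorem \ref{coclass} \ref{coclass_3} to again obtain $p^2\in\cod{G}$. These three cases are exhaustive under the hypothesis, which completes the argument.

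Honestly, there is no real obstacle here: the statement is a bookkeeping corollary, and the only thing to verify carefully is that the case division by class is complete and that each coclass value meets the order threshold required by the corresponding theorem (order $\ge p^5$ for coclass $2$ and order $\ge p^6$ for coclass $3$, both comfortably satisfied by $p^7$). The one point worth double-checking is that class $6$ for order $p^7$ does indeed mean maximal class (coclass $1$), so that Lemma \ref{maxclass} applies rather than Theorem \ref{coclass}; this is exactly the definition of maximal class for a group of order $p^7$.
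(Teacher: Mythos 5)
Your proof is correct and is exactly the argument the paper intends: the paper gives no written proof, noting only that the corollary follows easily from Lemma \ref{maxclass} and Theorem \ref{coclass}, and your three-way split by class $c\in\{4,5,6\}$ (coclass $3$, $2$, $1$) with the order thresholds checked is precisely that argument spelled out.
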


The question of whether \(p^2\) is in the set of codegrees for groups of order \(p^8\) with class 4 remains unsettled. If there exists a group \(G\) with \(p^2\notin\cod{G}\), we can make certain claims about the group's structure. These claims are detailed in Lemma \ref{p_squared_not_in_p8}.  

\begin{lem}
\label{p_squared_not_in_p8}
Let \(G\) be a \(p\)-group with nilpotence class 4, \(|G|=p^8\), and \(p^2\notin\cod{G}\). Then the following hold:
\begin{enumerate}
[label=(\roman*),font=\upshape]
\item \(Z=G_4\) is the unique normal subgroup of \(G\) of order \(p\),
\item \(Z_2=G_3\) is the unique normal subgroup of \(G\) of order \(p^2\),
\item \(\cod{G/Z_2}=\{1,p,p^3\}\),
\item either \(|Z_3|=p^4\), \(Z_3=G'\), and \(\cd{G/Z_2}=\{1,p,p^2\}\), or \(|Z_3|=p^5\), \(p^4\le|G'|\le p^5\), and \(\cd{G/Z_2}=\{1,p\}\).
\end{enumerate}
\end{lem}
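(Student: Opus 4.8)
The plan is to climb the upper central series from the bottom, using at each level that \(p^2\notin\cod{G}\) is inherited by every quotient \(G/N\) (a character of \(G/N\) inflates to one of \(G\) with the same codegree), so that Lemmas \ref{p_squared_in_p_fifth_class_3}, \ref{p_squared_in_p_sixth}, and \ref{maxclass} apply to any quotient of the appropriate order and class. Throughout I use that \(G/G'\) and \(G'\) are elementary abelian, that linear characters therefore have codegree \(1\) or \(p\), that a nonlinear \(\chi\) has \(\chi(1)<\cod{\chi}\) with \(\cod{\chi}\neq p^2\), and the ``commutator--power'' identity \([a^p,g]=[a,g]^p\) valid whenever \([a,g]\) is central, exactly as in Lemmas \ref{p_squared_in_p_fifth_class_3} and \ref{p_squared_in_p_sixth}. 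Since \(G\) has class \(4\) we have \(G_4\le Z\), \(G_3\le Z_2\), and \(G'\le Z_3\); since \(G\) is neither extraspecial nor of maximal class, Lemma \ref{lewis_bound_z} gives \(|Z_3|\ge p^4\).

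For (i) I first show \(|Z|=p\). If \(|Z|\ge p^2\), then \(G/Z\) has class \(3\) and order at most \(p^6\), so one of Lemmas \ref{maxclass}, \ref{p_squared_in_p_fifth_class_3}, \ref{p_squared_in_p_sixth} forces \(p^2\in\cod{G/Z}\subseteq\cod{G}\), a contradiction. Hence \(|Z|=p\); as any normal subgroup of order \(p\) is central, \(Z\) is the unique one, and \(Z=G_4\) because \(1\ne G_4\le Z\). For (ii) the same device works one level up: every \(N\) with \(Z\le N\le Z_2\) is normal in \(G\) (its image lies in \(Z(G/Z)=Z_2/Z\)), so if \(|Z_2|\ge p^3\) I may choose a normal \(N\) of order \(p^2\) inside \(Z_2\) with \(G_3\not\le N\) — possible because either \(Z_2/Z\) has rank at least \(2\) while \(G_3/Z\) is a single subgroup of order \(p\), or \(|G_3|\ge p^3\) already exceeds \(|N|\). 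Then \(G/N\) has order \(p^6\) and class at least \(3\), and Lemma \ref{p_squared_in_p_sixth} gives the contradiction \(p^2\in\cod{G/N}\). Thus \(|Z_2|=p^2\); since \(G_3\le Z_2\) and \(|G_3|\ge p^2\) we get \(Z_2=G_3\), and uniqueness of the order-\(p^2\) normal subgroup follows as in (i).

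For (iii) and (iv) I pass to \(H=G/Z_2\), which by (ii) has order \(p^6\) and class \(2\), with \(H'=G'/Z_2\) and \(Z(H)=Z_3/Z_2\), so \(|Z(H)|\ge p^2\) from \(|Z_3|\ge p^4\). Because \(H\) has class \(2\), Lemma 2.31 of \cite{thebook} gives \(\chi(1)^2=|H:Z(\chi)|\) for each \(\chi\in\irr{H}\), so the degrees are governed by \(|Z(H)|\): if \(|Z(H)|=p^3\) then \(\cd{H}=\{1,p\}\), while if \(|Z(H)|=p^2\) then \(\cd{H}\subseteq\{1,p,p^2\}\); \(|Z(H)|\ge p^5\) is impossible and \(|Z(H)|=p^4\) would force \(|H'|=p\). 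This produces the dichotomy of (iv) once \(|Z_3|\) and \(|G'|\) are pinned down. The crux, and the step I expect to be the main obstacle, is to prove \(|G'|\ge p^4\), equivalently \(|H'|\ge p^2\), i.e. to rule out \(|G_2/G_3|=p\): the quotient device of (i) and (ii) cannot reach this, since once \(Z\) and \(Z_2\) are fixed every normal subgroup of \(G\) of order \(p\) or \(p^2\) equals \(Z\) or \(Z_2\), so every quotient small enough to invoke the base-case lemmas has class at most \(2\). I therefore expect to exclude \(|H'|=p\) by the character-theoretic route of Lemma \ref{p_squared_in_p_sixth}: if \(|H'|=p\), the nondegenerate commutator form forces \(\cd{H}=\{1,p^2\}\) or \(\{1,p\}\) with the nonlinear codegrees controlled by whether \(Z(H)\) is cyclic, and combining the commutator--power identity with \(Z(\chi)/\ker{\chi}\) cyclic (Lemma 2.27 of \cite{thebook}) and the strong/weak-condition results (Theorems 2.4, 5.2, and C of \cite{extreme_degrees}) yields a contradiction exactly as in Cases 1 and 2 there.

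Finally, with \(|G'|\ge p^4\) in hand, ruling out \(|Z_3|=p^6\) is immediate, since \(|Z(H)|=p^4\) would give \(|H:Z(H)|=p^2\) and hence \(|H'|\le p\); thus \(|Z_3|\in\{p^4,p^5\}\). When \(|Z_3|=p^4\) both \(H'\) and \(Z(H)\) have order \(p^2\), so \(H'=Z(H)\) and \(Z_3=G'\), and a faithful/weak-condition argument shows \(H\) is not semi-extraspecial, giving \(\cd{H}=\{1,p,p^2\}\); when \(|Z_3|=p^5\) the degree bound already gives \(\cd{H}=\{1,p\}\) and \(p^4\le|G'|\le|Z_3|=p^5\). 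In both cases the only nonlinear codegree of \(H\) is \(p^3\): each nonlinear \(\chi\) has \(\cod{\chi}\ge p^3\) and \(\cod{\chi}\neq p^2\), and the weak condition (as in Lemma \ref{p_squared_in_p_sixth}) forces \(|\ker{\chi}|\) to be maximal, eliminating the alternatives \(p^4\) and \(p^5\) and giving \(\cod{H}=\{1,p,p^3\}\), which is (iii).
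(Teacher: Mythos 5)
Your part (i) is correct and is exactly the paper's argument, and your observation that Lemma \ref{lewis_bound_z} gives \(|Z_3|\ge p^4\) directly is a nice shortcut. But there are two genuine gaps. First, in (ii) your dichotomy is not exhaustive: you treat ``\(Z_2/Z\) has rank at least \(2\)'' and ``\(|G_3|\ge p^3\)'', but not the case where \(Z_2/Z\) is cyclic of order at least \(p^2\) and \(|G_3|=p^2\). In that case every normal subgroup of order \(p^2\) contains \(Z\) and lies in \(Z_2\), hence corresponds to the unique subgroup of order \(p\) of the cyclic group \(Z_2/Z\), i.e.\ equals \(G_3\); so no admissible \(N\) exists and your quotient device produces nothing. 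This configuration is precisely what must be killed by the commutator--power identity: for \(a\in Z_2\) and any \(g\), \([a,g]\in Z\) gives \([a^p,g]=[a,g]^p=1\) since \(|Z|=p\), so \(a^p\in Z\) and \(Z_2/Z\) is elementary abelian. (The paper instead first shows \(G/Z\) has a faithful character, so \(Z_2/Z\) is cyclic, and then applies this identity.) The fix is available in your preamble, but as written the step fails.

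The fatal gap is the step you yourself flag as the crux: excluding \(|G'|=p^3\), i.e.\ \(|H'|=p\) for \(H=G/Z_2\). You only ``expect'' a contradiction ``exactly as in Cases 1 and 2'' of Lemma \ref{p_squared_in_p_sixth}, and this route cannot work, because no argument at the level of \(H\) alone can succeed: there exist class-\(2\) groups of order \(p^6\) with derived subgroup of order \(p\) and \(p^2\) not a codegree. Concretely, take \(H_0\times\mathbb{Z}_p\times\mathbb{Z}_p\), where \(H_0\) is the order-\(p^4\) central product described before Lemma \ref{p_fifth_class_2}: every nonlinear irreducible character has degree \(p\) and kernel of order \(p^2\), so \(\cod{H_0\times\mathbb{Z}_p\times\mathbb{Z}_p}=\{1,p,p^3\}\); moreover \(|H:Z(H)|=p^2\) makes the weak condition hold vacuously, so Theorems 2.4, 5.2, C of \cite{extreme_degrees} applied to \(H\) can yield no contradiction. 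The exclusion must use that \(H\) arises as a central quotient of the class-\(4\) group \(G\) and must use characters of \(G\) not factoring through \(H\). That is what the paper does: it first proves every normal subgroup of order at least \(p^2\) contains \(G_3\) and that \(G/G_3\) has \emph{no faithful irreducible character} (via the strong condition, Theorem B, and Theorem G(ii) of \cite{extreme_degrees} applied to \(G\), which would force \(|G|\le p^6\)); this yields \(|G'|\ge p^3\) and, together with Lemma \ref{p_fifth_class_2}, part (iii). Then \(|G'|=p^3\) is ruled out by capability of \(G/Z_2\) (it is \((G/Z)/Z(G/Z)\); Lemma 2.5 of \cite{extreme_degrees}) when \(|Z_3|=p^4\), and by the square-index Lemma 2.4 of \cite{extreme_degrees} when \(|Z_3|=p^5\). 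Finally, your claim that showing \(H\) is ``not semi-extraspecial'' gives \(\cd{H}=\{1,p,p^2\}\) is also incomplete: ruling out \(\cd{H}=\{1,p^2\}\) still leaves \(\cd{H}=\{1,p\}\), whose exclusion in the paper requires Lemma 1.1 and Theorem 22.5 of \cite{PPO1} together with the normally constrained argument of \cite{nc_pgroups} (and Theorem F of \cite{extreme_degrees} for \(p=2\)). None of this machinery appears in your sketch, so parts (iii) and (iv) remain unproven.
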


\begin{proof}
Let \(G\) be as stated. If \(|Z|\ge p^{2}\), then \(G/Z\) has class 3 and \(p^{4}\le|G/Z|\le p^{6}\). Suppose \(|G/Z|=p^{4}\). Notice that \(G/Z\) has maximal class, and since \(|G/Z:Z(G/Z)|=p^3\), we have \(\cd{G/Z}=\{1,p\}\). By Lemma \ref{maxclass}, \(p^2\in\cod{G/Z}\), which is a contradiction.  If \(p^{5}\le|G/Z|\le p^{6}\), then \(p^{2}\in\text{cod}(G/Z)\) by Lemmas \ref{p_squared_in_p_fifth_class_3} and \ref{p_squared_in_p_sixth}. Hence \(|Z|=|G_{4}|=p\), which proves (i). 

Suppose \(|G_{3}|\ge p^{3}\), and let \(N\) be a normal subgroup of \(G\) of order \(p^2\) such that \(N\lneq G_{3}\). Then \(G/N\) has order \(p^6\) and class \(3\), and by Lemma \ref{p_squared_in_p_sixth},  \(p^2\in\cod{G/N}\), which is a contradiction. Hence \(|G_{3}|=p^{2}\). Let \(N\lhd G\) with \(|N|\ge p^{2}\), \(N\ngeq G_{3}\). Then \(G/N\) has class 3, \(p^{4}\le|G/N|\le p^{6}\), and as before, we have \(p^2\in\cod{G/N}\), a contradiction. Hence we may assume that all normal subgroups of \(G\) with order at least \(p^{2}\) contain \(G_{3}\). 

Suppose \(G/G_{3}\) has a faithful character. Then \(Z(G/G_{3})=X/G_{3}\) is cyclic. Since \(G/G_{3}\) has class \(2\), we have  \(X/G_{3}\ge(G/G_{3})'=G'/G_{3}\). As \(p^{2}\notin\text{cod}(G)\), \(G'/G_{3}\) is elementary abelian, and hence \(|G'/G_{3}|=p\). Since \(G/G'\) is also elementary abelian while \(X/G_{3}\) is cyclic, we have \(|X/G_{3}|=p^{2}\), and \(G'/G_{3}\) is the unique normal subgroup of \(G/G_3\) of order \(p\). Every nontrivial normal subgroup of a \(p\)-group intersects the center nontrivially, and hence contains \(G'/G_{3}\), so \(G/G_{3}\) satisfies the strong condition in \cite{extreme_degrees}. Thus \(\text{cd}(G/G_{3})=\{1,p^{2}\}\) by Theorem B of \cite{extreme_degrees}. Also notice that \(G/G_{3}\) has no nonfaithful nonlinear characters, as each nontrivial kernel \(K/G_{3}\) contains \(G'/G_{3}\), which implies \(G/K\) is abelian and hence has no nonlinear irreducible characters. Thus, no kernel of a nonlinear character of \(G\) can properly contain \(G_{3}\) and the  kernel of any nonfaithful nonlinear irreducible character of \(G\) is either \(G_{3}\) or \(G_{4}\). Any normal subgroup of \(G\) with order at least \(p^3\) must contain \(G'\), which shows that \(G\) satisfies the weak condition, and by Theorem G (ii) of \cite{extreme_degrees}, \(|G|\le p^{6}\). This is a contradiction, and therefore \(G/G_{3}\) cannot have a faithful character. 

Now, the kernel of a nonlinear character of \(G\) cannot have order \(p^{2}\), and \(|G'|\ge p^{3}\). Since every normal subgroup is the intersection of one or more kernels of irreducible characters, in order to realize \(G_3\) as such a kernel or intesection of kernels, there must be some nonlinear \(\chi\in\text{Irr}(G)\) with \(|\text{ker}(\chi)|\ge p^{3}\). Suppose \(|\text{ker}(\chi)|=p^{5}\). Then \(\text{cod}(\chi)\chi(1)=|G|/|\text{ker}(\chi)|=p^{3}\). Since the degree of \(\chi\) is strictly less than its codegree, we have \(\chi(1)=p\) and \(\text{cod}(\chi)=p^{2}\), a contradiction.
Hence \(p^{3}\le|\text{ker}(\chi)|\le p^{4}\). If \(|\text{ker}(\chi)|=p^{4}\), then \(\text{cod}(\chi)\chi(1)=p^{4}\) implies \(\text{cod}(\chi)=p^{3}\). If \(|\text{ker}(\chi)|=p^{3}\), then \(|G/\text{ker}(\chi)|=p^{5}\), class \(2\), and hence \(\text{cod}(\chi)=p^{3}\) by Lemma \ref{p_fifth_class_2}, proving (iii). 

If \(G/Z\) has no faithful characters, then \(\text{cod}(G/Z)=\{1,p,p^{3}\}\), implying \(G/Z\) has nilpotence class at most 2, a contradiction. Hence \(G/Z\) has a faithful character, and \(Z_{2}/Z\) is cyclic. Let \(Z_{2}=\langle a,Z\rangle\) and suppose \(|Z_2|\ge p^3\). Then \(a^{p}\notin Z\), and there exists some \(g\in G\) such that \([a^{p},g]\ne1\). For all \(x\in G\),
\([a,x]\in Z\), so \([a^{p},x]=[a,x]^{p}=1\) (since \(|Z|=p\)). Hence \(1\ne[a^{p},g]=[a,g]^{p}=1\), a contradiction. Thus \(|Z_{2}|=p^{2}\) and hence \(Z_{2}=G_{3}\), proving (ii).

To see (iv),  consider \(|Z_{3}|\). As \(G/Z_{2}\) has no faithful irreducible characters, \(Z_{3}/Z_{2}\) is not cyclic, so \(|Z_{3}|\ge p^{4}\). Suppose \(|Z_{3}|=p^{6}\). Then \(\text{cd}(G/Z_{2})=\{1,p\}\), which implies \(|\text{ker}(\chi)|=p^{4}\) for all nonlinear \(\chi\in\text{Irr}(G)\) such that \(|\text{ker}(\chi)|\ge p^{2}\). By Theorem B of \cite{extreme_degrees}, \(G/Z_{2}\) satisfies the strong condition. Hence \(\text{ker}(\chi)\le Z_{3}\), and since \(Z(\chi)\ge Z_{3}\), we have that  \(Z_{3}/\text{ker}(\chi)\)
is cyclic. Put \(\text{ker}(\chi)=K\),  \(\overline{G}=G/Z\), and \(\overline{Z_{3}}=\langle\overline{a},\overline{K}\rangle\). For all \(\overline{g}\in\overline{G}\), \([\overline{a},\overline{g}]\in\overline{Z_{2}}\). Since \(\overline{a^{p}}\notin\overline{Z_{2}},\) there is some \(\overline{x}\in\overline{G}\) such that \([\overline{a^{p}},\overline{x}]\ne1\). Then \([\overline{a},\overline{x}]^{p}=[\overline{a^{p}},\overline{x}]\ne1\), but this is a contradiction since \(|\overline{Z_{2}}|=p\), and hence
\([\overline{a},\overline{x}]^{p}=1\). Therefore \(|Z_{3}|\ne p^{6}\). 

Suppose \(|Z_{3}|=p^{4}\). If \(|G'|=p^{3}\), then by Lemma 2.5 of \cite{extreme_degrees}, \(G/Z_{2}\) is not capable, a contradiction. Hence \(|G'|=p^{4}\), that is, \(G'=Z_{3}\). By Lemma 1.1 of \cite{PPO1}, none of \(G\), \(G/Z\), or \(G/Z_{2}\) has an abelian subgroup of index \(p\). By Theorem 22.5 of \cite{PPO1}, neither \(\text{cd}(G)\) nor \(\text{cd}(G/Z_{2})\) is \(\{1,p\}\). Hence \(p^{2}\in\text{cd}(G/Z_{2})\), and we have some \(\chi\in\text{Irr}(G)\) with \(|\text{ker}(\chi)|=p^{3}\). 

Suppose \(p\notin\text{cd}(G/Z_{2})\). Then \(|\text{ker}(\chi)|=p^{3}\) for all nonlinear \(\chi\in\text{Irr}(G/Z_{2})\). By Lemma A.6.2 of \cite{PPO1}, \(Z_{3}\ge\text{ker}(\chi)\) for all such \(\chi\), and hence \(G\) is normally constrained, defined in \cite{nc_pgroups} as a \(p\)-group with \(G_i\) as the only normal subgroup of \(G\) of order \(|G_i|\) for every \(i\), \(1\le i\le c(G)\), where \(c(G)\) is the nilpotence class of \(G\). If \(p\) is odd, then by Theorem 3.5 of \cite{nc_pgroups}, \(|G:G'|=p^{4}\) implies \(p^{2}\le|G_{3}:G_{4}|\), a contradiction since \(|G_{3}:G_{4}|=p\). If \(p=2\), then since \(G/Z\) satisfies the weak condition, Theorem F of \cite{extreme_degrees} implies that \(|G:Z_2|=p^3\) or \(p^4\), a contradiction. Hence \(\text{cd}(G/Z_{2})=\{1,p,p^{2}\}\). 

Finally, suppose \(|Z_{3}|=p^{5}\). Recall that for \(\chi\in\text{Irr}(G)\), \(\chi(1)^{2}\le|G:Z|\). Since \(|G:Z_{3}|=p^{3}\), we have \(\text{cd}(G/Z_{2})=\{1,p\}\). By Lemma 2.4 of \cite{extreme_degrees}, \(|G'|\ne p^{3}\), since \(|G:Z_{3}|\)\ is not a square. 
\end{proof}

If we consider only \(p\)-groups satisfying Hypothesis (\(\ast\)), these results can be extended to groups which are arbitrarily large. Theorem \ref{star_class_p2} restates Theorem \ref{p2_coclass_star} in terms of nilpotence class.


\begin{thm}
\label{star_class_p2}
Let a group \(G\) and all of its quotients satisfy Hypothesis \((\ast )\). If \(|G|=p^{2n}\) or \(p^{2n-1}\) where \(n\ge 3\), and the nilpotence class of \(G\) is at least \(n\), then \(p^2\in\cod{G}\).
\end{thm}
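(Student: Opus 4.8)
The plan is to induct on \(|G|\), at each step passing to a proper quotient that either falls under a base case, under a maximal-class argument, or under the inductive hypothesis; Hypothesis \((\ast)\) will be invoked in exactly one stubborn configuration. Throughout I use that inflation carries an irreducible character of a quotient \(G/N\) to one of \(G\) with the same codegree, so it suffices at each stage to exhibit a quotient having \(p^2\) as a codegree. It is convenient to observe first that, writing \(|G|=p^m\), the hypothesis ``\(|G|=p^{2n}\) or \(p^{2n-1}\) with class at least \(n\)'' is exactly the condition that the nilpotence class of \(G\) is at least \(\lceil m/2\rceil\), and the restriction \(n\ge 3\) is just \(m\ge 5\). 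Lemmas \ref{p_squared_in_p_fifth_class_3} and \ref{p_squared_in_p_sixth} supply the base cases \(m=5\) and \(m=6\). So assume \(m\ge 7\), set \(Z=Z(G)\), let \(c\) be the class of \(G\) (so \(c\ge\lceil m/2\rceil\)), and recall that \(G/Z\) has class \(c-1\).

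Suppose first that \(|Z|=p^k\) with \(k\ge 2\). Then \(G/Z\) has class \(c-1\ge\lceil m/2\rceil-1=\lceil (m-2)/2\rceil\ge\lceil(m-k)/2\rceil\), so its class meets the threshold required of a group of its order \(p^{m-k}\). Since \(c-1\ge\lceil m/2\rceil-1\ge 3\) for \(m\ge 7\), the quotient \(G/Z\) has class at least \(3\) and hence order at least \(p^4\). If \(|G/Z|=p^4\) then class at least \(3\) forces maximal class, and Lemma \ref{maxclass} gives \(p^2\in\cod{G/Z}\); otherwise \(|G/Z|\ge p^5\) and the inductive hypothesis applies to \(G/Z\). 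In either case \(p^2\in\cod{G/Z}\subseteq\cod{G}\).

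Now suppose \(|Z|=p\), so \(G/Z\) has order \(p^{m-1}\) and class \(c-1\). If \(m\) is odd, then \(m-1=2(n-1)\) and the threshold for \(G/Z\) is \(n-1\); since \(c\ge n\) we have \(c-1\ge n-1\), and \(n-1\ge 3\) as \(m\ge 7\), so the inductive hypothesis applies to \(G/Z\). If \(m=2n\) is even but \(c\ge n+1\), then the threshold for \(G/Z\) (of order \(p^{2n-1}\)) is \(n\), and \(c-1\ge n\) again lets the inductive hypothesis apply. This leaves a single configuration: \(m=2n\) with \(c=n\) exactly, that is, coclass \(n\) and order \(p^{2n}\), where quotienting by \(Z\) drops the class one notch below what the induction needs.

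In this last case \(|G|=p^{2n}=p^{2c}\), so Hypothesis \((\ast)\) for \(G\) yields \(|Z_2|\ne p^2\), whence \(|Z_2/Z|\ge p^2\) because \(|Z|=p\). Since the class is \(n\) we have \(G_n\le Z\) and \(G_{n-1}\le Z_2\), while \(G_{n-1}Z/Z=\gamma_{n-1}(G/Z)\ne 1\). I then choose a normal subgroup \(N\) with \(Z\le N\le Z_2\), \(|N|=p^2\), and \(G_{n-1}\not\le N\): such \(N/Z\) is an order-\(p\) subgroup of \(Z_2/Z=Z(G/Z)\), and the bound \(|Z_2/Z|\ge p^2\) guarantees one avoiding \(\gamma_{n-1}(G/Z)\) (when the latter has order \(p\), pick \(N/Z\) among the \(\ge p+1\) order-\(p\) subgroups distinct from it). Then \(G/N\) has order \(p^{2(n-1)}\) and class exactly \(n-1\) (at most \(n-1\) since \(G_n\le N\), at least \(n-1\) since \(G_{n-1}\not\le N\)), so the inductive hypothesis gives \(p^2\in\cod{G/N}\subseteq\cod{G}\). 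The main obstacle is precisely this final configuration: without \((\ast)\), \(Z_2\) could have order \(p^2\), leaving no room to lower the order by \(p^2\) while retaining enough class to re-enter the induction — which is exactly the gap the hypothesis is introduced to close.
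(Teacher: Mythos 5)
Your strategy coincides with the paper's: induct, pass to \(G/Z\) when \(|Z|\ge p^2\) (with Lemma \ref{maxclass} handling the order-\(p^4\) quotient), and in the remaining \(|Z|=p\) situation use Hypothesis \((\ast)\) to produce a normal subgroup \(N\) of order \(p^2\) with \(c(G/N)=c-1\). In one respect you are even more careful than the paper: you observe that Hypothesis \((\ast)\) literally applies to \(G\) only in the configuration \(|G|=p^{2n}\), \(c(G)=n\), and you route every other \(|Z|=p\) configuration through \(G/Z\) instead; you also make explicit the requirement \(G_{n-1}\not\le N\) needed so that the class drops by exactly one, which the paper leaves implicit.

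However, there is a genuine gap at the one step where the whole difficulty is concentrated: the existence of \(N\). When \(\gamma_{n-1}(G/Z)\) has order \(p\), you choose \(N/Z\) among ``the \(\ge p+1\) order-\(p\) subgroups'' of \(Z_2/Z\). That count does not follow from \(|Z_2/Z|\ge p^2\) alone: a cyclic group of order \(p^2\) has exactly one subgroup of order \(p\), so if \(Z_2/Z\) were cyclic with \(\gamma_{n-1}(G/Z)\) its unique minimal subgroup, every admissible \(N/Z\) would contain \(\gamma_{n-1}(G/Z)\) and your construction would fail. What closes this hole---and what the paper proves at precisely this point---is that \(Z_2/Z\) has exponent \(p\) whenever \(|Z|=p\): if \(a\in Z_2\) satisfied \(a^p\notin Z\), there would exist \(g\in G\) with \([a^p,g]\ne 1\); but \([a,g]\in Z\) is central, so \([a^p,g]=[a,g]^p=1\) because \(|Z|=p\), a contradiction. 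Hence \(Z_2/Z\) is elementary abelian of order at least \(p^2\), and only then does it have at least \(p+1\) subgroups of order \(p\), none of which can contain a given one other than itself. Your proof becomes complete once you insert this exponent argument (or some other argument ruling out cyclicity of \(Z_2/Z\)); as written, the key claim is unjustified.
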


\begin{proof}
Induct on \(n\). The base case when \(n=3\) is established by Lemmas \ref{p_squared_in_p_fifth_class_3} and \ref{p_squared_in_p_sixth}. Now assume \(|G|=p^{2n}\) or \(p^{2n-1}\) and \(c(G)=c\ge n\), where \(n\ge 4\). Suppose \(|Z|\ge p^2\). Then \(c(G/Z)=c-1\) and \(|G/Z|\le p^{2n-2}\) or \(p^{2n-3}\). If \(|G/Z|\ge p^5\) then we are done by the inductive assumption. Since \(c-1\ge 3\), we know \(|G/Z|\ge p^4\), and if \(|G/Z|=p^4\) then \(p^2\in \cod{G/Z}\) by Lemma \ref{maxclass}. 

We may now assume \(|Z|=p\). By Hypothesis \(( \ast )\), \(|Z_2:Z|\ge p^2\). Suppose \(Z_2/Z\) has exponent greater than \(p\), and let \(a\in Z_2\) such that \(a^p\notin Z\). There exists \(g\in G\) such that \([a^p,g]\ne 1\). Since \([a,g]\in Z\), we have \(1\neq [a^p,g]=[a,g]^p\), which is trivial, as \(|Z|=p\). This contradiction shows that \(Z_2/Z\) is elementary abelian, and we can find \(N\lhd G\) such that \(Z<N<G\), \(|N:Z|=p\), and \(c(G/N)=c-1\). Now \(|G/N|=p^{2n-2}\) or \(p^{2n-3}\), and we are done by the inductive assumption.
\end{proof}

Theorem \ref{p2_coclass_star} now follows as a corollary of Theorem \ref{star_class_p2}.


\begin{proof}[Proof of Theorem \ref{p2_coclass_star}]
Let \(G\) have coclass \(n\), and \(|G|=p^{2m}\) or \(p^{2m+1}\) where \(m\ge n\). If \(|G|=p^{2m}\), then \(c(G)=2m-n\ge m\). If \(|G|=p^{2m+1}\), then \(c(G)=2m+1-n\ge m\). In either case, we are done by Theorem \ref{star_class_p2}.\end{proof}


\begin{thebibliography}{1}
 \bibitem{beisiegel} Beisiegel, B.: Semi-extraspezielle \(p\)-Gruppen. Mathematische Zeitschrift. 156, 247-254 (1977)
 
\bibitem{PPO1} Berkovich, Y.: Groups of Prime Power Order, vol. 1. De Gruyter Expositions in Mathematics, De Gruyter (2008)

\bibitem{capable_extra_special} Beyl, F. R., Felgner, U., Schmid, P.: On groups occurring as central factor groups. J. Algebra. 61, 161-177 (1979)

\bibitem{nc_pgroups} Bonmassar, C., Scoppola, C. M.: Normally constrained \(p\)-groups. Bollettino dell'Unione Matematica Italiana. 2-B, 161-168 (1999)

\bibitem{magma} Bosma, W., Cannon, J., Playoust, C.: The magma algebra system i: The user language. Journal of Symbolic Computation. 24, 235-265 (1997)

\bibitem{memax} Croome, S., Lewis, M. L.: Character codegrees of maximal class \(p\)-groups. ArXiv e-prints 1809.07699 (2018) 

\bibitem{codandnil} Du, N., Lewis, M. L.: Codegrees and nilpotence class of \(p\)-groups. J. Group Theory. 19, 561-567 (2016)

\bibitem{extreme_degrees} Gustavo, A. F., Moret\'o, A.: Groups with two extreme character degrees and their normal subgroups. Trans. Amer. Math. Soc. 353, 2171-2192 (2001)

\bibitem{thebook} Isaacs, I. M.: Character Theory of Finite Groups. AMS Chelsea Publishing, Providence (2006)

\bibitem{OG} Qian, G., Wang, Y., Wei, H.: Co-degrees of irreducible characters in finite groups. J. Algebra. 312, 946-955 (2007)

\bibitem{verardi} Verardi, L.: Gruppi semiextraspeciali di esponenta \(p\). Annali di Matematica Pura ed Applicata. 148, 131-171 (1987)

\end{thebibliography}
\end{document}